\newcommand{\BEAS}{\begin{eqnarray*}}
\newcommand{\EEAS}{\end{eqnarray*}}
\newcommand{\BEQ}{\begin{equation}}
\newcommand{\EEQ}{\end{equation}}
\newcommand{\BIT}{\begin{itemize}}
\newcommand{\EIT}{\end{itemize}}
\newcommand{\eg}{{\it e.g.}}
\newcommand{\ie}{{\it i.e.}}
\newcommand{\ones}{\mathbf 1}
\newcommand{\reals}{{\mbox{\bf R}}}
\newcommand{\integers}{{\mbox{\bf Z}}}
\newcommand{\symm}{{\mbox{\bf S}}}  
\newcommand{\complex}{{\mbox{\bf C}}}
\newcommand{\Rank}{\mathop{\bf Rank}}
\newcommand{\Tr}{\mathop{\bf Tr}}
\newcommand{\diag}{\mathop{\bf diag}}
\newcommand{\Expect}{\mathop{\bf E{}}}
\newcommand{\argmin}{\mathop{\rm argmin}}
\newcommand{\sign}{\mathop{\bf sign}}
\newcounter{oursection}
\newcounter{algorithmctr}[section]
\renewcommand{\thealgorithmctr}{\thesection.\arabic{algorithmctr}}
\newenvironment{algdesc}
   {\refstepcounter{algorithmctr}\begin{list}{}{
       \setlength{\rightmargin}{0\linewidth}
       \setlength{\leftmargin}{.05\linewidth}}
       \rmfamily\small
       \item[]{\setlength{\parskip}{0ex}\hrulefill\par
        \nopagebreak{\bfseries\textsf{Algorithm \thealgorithmctr~}}}}
   {{\setlength{\parskip}{-1ex}\nopagebreak\par\hrulefill} \end{list}}
\newtheorem{theorem}{Theorem}
\newtheorem{lemma}{Lemma}
\title{General Heuristics for Nonconvex Quadratically Constrained Quadratic Programming}
\author{Jaehyun Park \and Stephen Boyd}
\begin{document}
\maketitle

\begin{abstract}
We introduce the \emph{Suggest-and-Improve} framework for general nonconvex
quadratically constrained quadratic programs (QCQPs).
Using this framework, we generalize a number of known methods
and provide heuristics to get approximate
solutions to QCQPs for which no specialized methods are available.
We also introduce an open-source Python
package \texttt{QCQP}, which implements
the heuristics discussed in the paper.
\end{abstract}
\clearpage \tableofcontents \newpage

\section{Introduction}\label{s-intro}
In this paper we introduce the \emph{Suggest-and-Improve} heuristic framework
for general nonconvex quadratically constrained quadratic programs (QCQPs).
This framework
can be applied to general QCQPs for which there are no available specialized methods.
We only briefly mention global methods for solving QCQPs in~\S\ref{s-prev-work},
as the exponential running time of these methods makes them unsuitable for medium- to large-scale problems.
Our main focus, instead, will be on polynomial-time methods for obtaining approximate solutions.

\subsection{Quadratically constrained quadratic programming}
A quadratically constrained quadratic program (QCQP) is an optimization problem that can be written in the following form:
\BEQ\label{qcqp-formulation}
\begin{array}{ll}
\mbox{minimize} & f_0(x) = x^T P_0 x + q_0^T x + r_0 \\
\mbox{subject to} & f_i(x) = x^T P_i x + q_i^T x + r_i \le 0,\quad i=1, \ldots, m,
\end{array}
\EEQ
where $x \in \reals^n$ is the optimization variable, and
$P_i \in \reals^{n \times n}$, $q_i \in \reals^n$, $r_i \in \reals$ are given problem data, for $i = 0, 1, \ldots, m$.
Throughout the paper, we will use $f^\star$ to denote the optimal value of~\eqref{qcqp-formulation}, and $x^\star$ to denote an optimal point, \ie, that attains the objective value of $f^\star$, while satisfying all the constraints.
For simplicity, we assume that all $P_i$ matrices are symmetric, but we do not assume any other conditions such as definiteness. This means that~\eqref{qcqp-formulation}, in general, is a nonconvex optimization problem.

The constraint $f_i(x) \le 0$ is affine if $P_i = 0$.
If we need to handle affine constraints differently from quadratic ones,
we collect all affine constraints and explicitly write them as a single inequality
$Ax \le b$, where $A \in \reals^{p \times n}$ and $b \in \reals^p$ are of
appropriate dimensions, and the inequality $\le$ is elementwise.
Then,~\eqref{qcqp-formulation} is expressed as:
\BEQ\label{qcqp-full}
\begin{array}{ll}
\mbox{minimize} & f_0(x) \\
\mbox{subject to} & f_i(x) \le 0, \quad i=1, \ldots, \tilde{m}
\\ & Ax \le b,
\end{array}
\EEQ
where $\tilde{m} = m - p$ is the number of nonaffine constraints.
When all constraints are affine (\ie, $\tilde{m} = 0$), the problem is a (nonconvex) quadratic program (QP).

Even though we set up~\eqref{qcqp-formulation} in terms of inequality constraints only,
it also allows quadratic equality constraints of the form $h_i(x) = 0$ to be added,
as they can be expressed as two quadratic inequality constraints:
\[
h_i(x) \le 0, \quad -h_i(x) \le 0.
\]
An important example of quadratic equality constraint is $x_i^2 = 1$, which forces $x_i$ to be either $+1$ or $-1$, and thus encoding a Boolean variable.

There are alternative formulations of~\eqref{qcqp-formulation} that are all equivalent.
We start with the \emph{epigraph form} of~\eqref{qcqp-formulation},
which makes the objective function linear (hence convex) without loss of generality,
by introducing an additional scalar variable $t \in \reals$:
\BEQ\label{qcqp-epigraph}
\begin{array}{ll}
\mbox{minimize} & t \\
\mbox{subject to} & f_0(x) \le t \\
& f_i(x) \le 0, \quad i=1, \ldots, m.
\end{array}
\EEQ
It is also possible to write a problem equivalent to~\eqref{qcqp-formulation}
that only has quadratic equality constraints, by introducing an additional variable $s \in \reals^m$:
\[
\begin{array}{ll}
\mbox{minimize} & x^T P_0 x + q_0^T x + r_0 \\
\mbox{subject to} & x^T P_i x + q_i^T x + r_i + s_i^2 = 0,\quad i=1, \ldots, m.
\end{array}
\]

The \emph{homogeneous form} of~\eqref{qcqp-formulation} is a QCQP
with $n+1$ variables and $m+1$ constraints,
where the objective function and lefthand side of every constraint is a quadratic form in the variable,
\ie, there are no linear terms in the variable~\cite{luo2010semidefinite}.
For $i = 0, \ldots, m$, define
\[
\tilde{P}_i = \left[ \begin{array}{cc} P_i & (1/2)q_i \\ (1/2)q_i^T & r_i \end{array} \right].
\]
The homogeneous form of~\eqref{qcqp-formulation} is given by:
\BEQ\label{qcqp-homogeneous}
\begin{array}{ll}
\mbox{minimize} & z^T \tilde{P}_0 z \\
\mbox{subject to} & z^T \tilde{P}_i z \le 0, \quad i=1, \ldots, m \\
& z_{n+1}^2 = 1,
\end{array}
\EEQ
with variable $z \in \reals^{n+1}$.
Note that the variable dimension and the number of constraints of~\eqref{qcqp-homogeneous}
is one larger than that of~\eqref{qcqp-formulation}.
This problem is homogeneous in the sense that scaling
$z$ by a factor of $t \in \reals$ scales both the objective
and lefthand sides of the constraints by a factor of $t^2$.
It is easy to check that if $z^\star$ is a solution of~\eqref{qcqp-homogeneous},
then the vector $(z^\star_1/z^\star_{n+1}, \ldots, z^\star_n/z^\star_{n+1})$ is a solution of~\eqref{qcqp-formulation}.

\subsection{Tractable cases}
The class of problems that can be written in the form of~\eqref{qcqp-formulation} is very broad,
and as we will see in~\S\ref{s-examples}, QCQPs are NP-hard in general.
However, there are a number of special cases which we can solve efficiently.

\paragraph{Convex QCQP.}
When all $P_i$ matrices are positive semidefinite, problem~\eqref{qcqp-formulation} is convex and thus easily solvable in polynomial time~\cite[\S4.4]{boyd2004convex}.

\paragraph{QCQP with one variable.}
If the problem has only one variable, \ie, $n=1$, then the feasible set
is explicitly computable using only elementary algebra.
The feasible set in this case is a collection of at most $m+1$ disjoint, closed intervals on $\reals$.
It is possible to compute these intervals in $O(m \log m)$ time (using a binary search tree, for example).
Then, minimizing a quadratic function in this feasible set can be done by evaluating
the objective at the endpoints of the intervals, as well as checking the
unconstrained minimizer (if there is one).

While one-variable problems are rarely interesting by themselves,
we will take advantage of their solvability in~\S\ref{s-greedy}
to develop a greedy heuristic for~\eqref{qcqp-formulation}.
For more details on the solution method and time complexity analysis, see Appendix~\ref{s-onevar}.

\paragraph{QCQP with one constraint.}
Consider QCQPs with a single constraint (\ie, $m=1$):
\BEQ\label{eq-oneqcqp-general}
\begin{array}{ll}
\mbox{minimize} & f_0(x) \\
\mbox{subject to} & f_1(x) \le 0.
\end{array}
\EEQ
Even when $f_0$ and $f_1$ are both nonconvex,~\eqref{eq-oneqcqp-general} is
solvable in polynomial time~\cite{boyd2004convex,feron2000nonconvex,ye2003new,luo2010semidefinite}.
This result, also known as the $S$-procedure
in control theory~\cite{boyd1994linear,polik2007survey},
states that even though~\eqref{eq-oneqcqp-general}
is not convex, strong duality holds and the Lagrangian relaxation
produces the optimal value $f^\star$.
A variant of~\eqref{eq-oneqcqp-general} with an equality constraint $f_1(x) = 0$ is also efficiently solvable.

In Appendix~\ref{s-oneqcqp}, we derive a solution method for the special case of~\eqref{eq-oneqcqp-general},
where the objective function is given by $f_0(x) = \|x-z\|_2^2$.
This particular form will be used extensively in~\S\ref{s-admm}.
Refer to~\cite{feng2012duality,more1993generalizations,polik2007survey} for the solution methods for the general case.

\paragraph{QCQP with one interval constraint.}
Consider a variant of~\eqref{eq-oneqcqp-general}, with an
interval constraint:
\BEQ\label{eq-oneqcqp-interval}
\begin{array}{ll}
\mbox{minimize} & f_0(x) \\
\mbox{subject to} & l \le f_1(x) \le u.
\end{array}
\EEQ
Solving this variant reduces to solving~\eqref{eq-oneqcqp-general} twice, once
with the upper bound constraint $f_1(x) \le u$ only, and once with the lower
bound constraint $f_1(x) \ge l$ only~\cite{ben1996hidden,huang2016consensus}.
One of the solutions is guaranteed to be an optimal point of~\eqref{eq-oneqcqp-interval}.

\paragraph{QCQP with homogeneous constraints with one negative eigenvalue.}
Consider a homogeneous constraint of the form $x^T P x \le 0$,
where $P$ has exactly one negative eigenvalue.
This constraint can be rewritten as a disjunction of two second-order cone (SOC) constraints~\cite{lobo1998applications}.
Let $P = Q\Lambda Q^T$ be the eigenvalue decomposition of $P$, with $\lambda_1 < 0$. Then, $x^T P x \le 0$ if and only if
\[
\sum_{i=2}^n \lambda_i (q_i^T x)^2 \le -\lambda_1 (q_1^T x)^2,
\]
or equivalently,
\[
\left\|(\sqrt{\lambda_2}q_2^T x, \ldots, \sqrt{\lambda_n}q_n^T x) \right\|_2 \le \sqrt{|\lambda_1|} |q_1^T x|,
\]
where $q_1, \ldots, q_n$ are the columns of $Q$.
Depending on the sign of $q_1^T x$, one of the following SOC inequalities is true if and only if $x^T P x \le 0$:
\begin{eqnarray}
\left\|(\sqrt{\lambda_2}q_2^T x, \ldots, \sqrt{\lambda_n}q_n^T x) \right\|_2 &\le& \sqrt{|\lambda_1|} q_1^T x, \label{soc-rewriting-pos}\\
\left\|(\sqrt{\lambda_2}q_2^T x, \ldots, \sqrt{\lambda_n}q_n^T x) \right\|_2 &\le& -\sqrt{|\lambda_1|} q_1^T x. \label{soc-rewriting-neg}
\end{eqnarray}
Suppose that the constraint $x^T P x \le 0$ was the only nonconvex constraint of~\eqref{qcqp-formulation}.
Then, we can solve two convex problems, one where the nonconvex constraint is replaced with~\eqref{soc-rewriting-pos},
and the other where the same constraint is replaced with~\eqref{soc-rewriting-neg}.
The one that attains a better solution is optimal.
Note that if the sign of $q_1^T x$ of any solution is known,
then that information can be used to avoid solving both~\eqref{soc-rewriting-pos} and~\eqref{soc-rewriting-neg}.
For example, if it is known a priori that some solution $x$ satisfies $q_1^T x \ge 0$,
then only~\eqref{soc-rewriting-pos} needs to be solved.

This approach generalizes to the case where
there are multiple constraints in the form of $x^T P x \le 0$,
where $P$ has exactly one negative eigenvalue.
With $k$ such constraints, one needs to solve $2^k$ convex problems.

\subsection{Algorithm}\label{s-algorithm}

We introduce the \emph{Suggest-and-Improve} framework,
which is a simple but flexible idea that encapsulates all of our heuristics.
The overall algorithm can be summarized in two high-level steps, as shown in Algorithm~\ref{master-algo}.

\begin{algdesc}\label{master-algo}
\emph{Suggest-and-Improve algorithm.}
\begin{tabbing}
1.\ \emph{Suggest.} Find a \emph{candidate point} $x \in \reals^n$. \\
2.\ \emph{Improve.} Run a local method from $x$ to find a point $z \in \reals^n$ that is no worse than $x$. \\
3.\ {\bf return} $z$.
\end{tabbing}
\end{algdesc}
For Algorithm~\ref{master-algo} to be well-defined, we need the notion of better points used in the \emph{Improve} step.
While there are other reasonable ways to define this, we use the following definition throughout the paper.
Let $p_+ = \max\{p, 0\}$ denote the positive part of $p \in \reals$, and
\[
v(x) = \max\{ f_1(x)_+, \ldots, f_m(x)_+ \}
\]
denote the maximum constraint violation of $x \in \reals^n$.
We say $z \in \reals^n$ is \emph{better} than $x \in \reals^n$ if one of the following conditions is met:
\begin{itemize}
	\item Maximum constraint violation of $z$ is smaller than that of $x$, \ie, $v(z) < v(x)$.
	\item Maximum constraint violation of $z$ and $x$ are the same, and the objective function attains smaller value at $z$ than at $x$, \ie, $v(z) = v(x)$ and $f_0(z) < f_0(x)$.
\end{itemize}
In other words, we are defining better points in terms of the lexicographic order of the pair $(v(x), f_0(x))$.
This definition easily extends to the notion of a best point in the set of points
(note that there can be multiple best points in a given set).

The candidate points returned from the \emph{Suggest} method serve as starting points of local methods in the \emph{Improve} step,
and we do not require any condition on them in terms of feasibility.
\emph{Suggest} methods can be randomized, and parallelized to produce multiple candidate points.
\emph{Improve} methods attempt to produce better points (as defined above) than the candidate points.
\emph{Improve} methods can be applied in a flexible manner.
For example, note that composition of any number of \emph{Improve} methods is also an \emph{Improve} method.
One can also apply different \emph{Improve} methods to a single candidate point in parallel.
Similarly, given multiple candidate points, one can apply multiple \emph{Improve}
methods on each candidate point and take a best one.

There are many different ways to implement the \emph{Suggest} and \emph{Improve} methods,
and how they are implemented determines the running time, suboptimality,
and various other properties of the overall algorithm.
Essentially, they can be considered as modules that one can choose from a collection of alternatives.
Throughout the paper, we will explore different options to implement them.

To motivate the discussions, we start by recognizing a wide variety of well-known problem classes and examples that can be formulated as QCQPs, in~\S\ref{s-examples}.
In~\S\ref{s-relax}, we explore ways to implement the \emph{Suggest} step.
Our focus is on the \emph{relaxation} technique,
which is typically used to approximate the solution
to computationally intractable problems by replacing
constraints with some other constraints that are easier to handle.
We will discuss various relaxations of~\eqref{qcqp-formulation} for finding reasonable candidate points,
as well as obtaining a lower bound on the optimal value $f^\star$.
Then, in~\S\ref{s-methods}, we discuss local optimization methods for improving a given candidate point.
We will start with specialized methods for some subclasses of QCQP,
and give three methods that can be applied to general QCQPs.
Finally, we introduce an open-source implementation of these methods in~\S\ref{s-package},
and show several numerical examples in~\S\ref{s-experiment}.

\subsection{Previous work}\label{s-prev-work}

\paragraph{Quadratic programming.}
Research on QP began in the 1950s
(see, \eg,~\cite{frank1956algorithm,hildreth1957quadratic}).
Several hardness results on QP were published
once the concept of NP-completeness and NP-hardness was established in the early 1970s~\cite{cook1971complexity,karp1972reducibility}.
In particular,~\cite{sahni1974computationally} showed that QP with a negative definite quadratic term is NP-hard.
On the other hand, QP with convex objective function was shown to be polynomial-time solvable~\cite{kozlov1980polynomial}.

\paragraph{Quadratically constrained quadratic programming.}
Van de Panne~\cite{van1966programming} studied a special class of QCQPs,
which is to optimize an affine function subject to a single quadratic constraint over a polyhedron.
While problems with a quadratic objective function and multiple quadratic constraints were introduced as early as 1951 in~\cite{kuhn1951nonlinear},
duality results and cutting plane algorithms for solving them were
developed later~\cite{baron1972quadratic}.

\paragraph{QCQP with one constraint.}
Problem~\eqref{eq-oneqcqp-general} arises from many optimization algorithms, and most notably, in trust region methods~\cite{hsia2013revisit,gay1981computing,more1983computing,stern1995indefinite,nocedal1999numerical}.
Eigenvalue or singular value problems are also formulated in this form~\cite{lemon2016low,stern1995indefinite,hsia2013revisit}.
The strong duality result is known under various names in different disciplines.
The term $\mathcal{S}$-procedure is from control theory~\cite{boyd1994linear,polik2007survey}. Variations of the $\mathcal{S}$-procedure are
known in linear algebra in the context of simultaneous diagonalization of symmetric
matrices~\cite{calabi1964linear,uhlig1979recurring}.
Many related results and additional references can be found in~\cite[\S1.8]{horn1991topics}, and~\cite[\S4.10.5]{ben2001lectures}.

\paragraph{Semidefinite programming.}
Semidefinite programming (SDP) and semidefinite programming relaxation (SDR)
are closely related to QCQPs.
The study of SDPs started since the early 1990s~\cite{alizadeh1991combinatorial,nesterov1994interior},
and subsequent research during the 1990s
was driven by various applications, including combinatorial problems~\cite{goemans1995maxcut},
control~\cite{boyd1994linear,scherer1997multiobjective,dullerud2000course},
communications and signal processing~\cite{luo2003applications,davidson2000design,ma2002quasi},
and many other areas of engineering.
For more extensive overviews and bibliographies of SDPs, refer to~\cite{wolkowicz2000handbook,todd2001semidefinite,lewis1996eigenvalue,vandenberghe1996semidefinite}.
The idea of SDR for QCQPs was suggested as early as 1979 in~\cite{lovasz1979shannon},
but the work that started a rapid development of the technique was~\cite{goemans1995maxcut},
which applied SDR to the maximum cut problem and derived a data-independent
approximation factor of $0.87856$.
Since then, SDR has also been applied outside the domain of combinatorial optimization problems~\cite{vandenberghe1996semidefinite,luo2010semidefinite}.

\paragraph{Global methods.}
Global methods for nonconvex problems always find an optimal point and
certify it, but are often slow; the worst-case running time grows exponentially with problem size (unless P is NP).
Many known algorithms for globally solving~\eqref{qcqp-formulation} are based on the \emph{branch-and-bound} framework.
Branch-and-bound generally works by recursively splitting the feasible set
into multiple parts and solving the problem restricted in each of the subdivision,
typically via relaxation techniques.
For more details on the branch-and-bound scheme,
see~\cite{lawler1966branch,balas1968note,moore1991global,boyd2003branch}.
A popular variant of the branch-and-bound scheme is the \emph{branch-and-cut} method,
which incorporates cutting planes~\cite{kelley1960cutting} to tighten the subproblems generated from branching~\cite{padberg1991branch,mitchell2002branch}.
See, for example,~\cite{audet2000branch} for a branch-and-cut method for solving~\eqref{qcqp-formulation}.
Linderoth~\cite{linderoth2005simplicial} proposes an algorithm that partitions the feasible region into the Cartesian product of two-dimensional triangles and rectangles.
Burer and Vandenbussche~\cite{burer2008finite} shows an algorithm that uses SDR as a subroutine.
A variant of the method tailored for nonconvex QPs also exists~\cite{chen2012globally}.

\paragraph{Existing solvers.}
There are a number of off-the-shelf software packages
that can handle various subclasses of QCQP.
We mention some of the solvers here:
GloMIQO~\cite{misener2013glomiqo},
BARON~\cite{sahinidis2014baron},
Ipopt~\cite{waechter2009introduction},
Couenne~\cite{cplex2009v12}, and others
provide global methods for (mixed-integer) QCQPs.
Gurobi~\cite{gurobi},
CPLEX~\cite{cplex2009v12},
MOSEK~\cite{mosek},
and SCIP~\cite{achterberg2009scip}
provide global methods for mixed-integer nonlinear programs, with limited support for nonconvex constraints.
Packages such as
ANTIGONE~\cite{misener2014antigone},
KNITRO~\cite{byrd2006knitro},
and NLopt~\cite{johnson2014nlopt}
provide global and local optimization methods
for nonlinear optimization problems.

\section{Examples and applications}\label{s-examples}
In this section, we show various subclasses of QCQP,
as well as several applications that are more specific.

\subsection{Examples}

\paragraph{Polynomial problems.}
A polynomial optimization problem seeks to minimize a polynomial over a set
defined by polynomial inequalities:
\BEQ\label{poly-opt}
\begin{array}{ll}
\mbox{minimize} & p_0(x) \\
\mbox{subject to} & p_i(x) \le 0, \quad i=1, \ldots, m.
\end{array}
\EEQ
Here, each $p_i: \reals^n \rightarrow \reals$ is a polynomial in $x$. All
polynomial optimization problems can be converted to QCQPs by introducing
additional variables that represent the product of two terms, and appropriate equality
constraints that describe these relations.
For example, in order to represent a
term $x_1^2 x_2$, one can introduce additional variables, say, $u$ and $v$,
and add constraints $u = x_1^2$ and $v = u x_2$.
Then we can simply write $v$ in
place of $x_1^2 x_2$. In general, at most $d-1$ new variables and constraints
are sufficient to describe any term of order $d$. By applying these
transformations iteratively, we can transform the original polynomial
problem into a QCQP with additional variables. As a concrete example, suppose
that we want to solve the following polynomial problem:
\[
\begin{array}{ll}
\mbox{minimize}   & x^3-2xyz+y+2 \\
\mbox{subject to} & x^2+y^2+z^2-1=0,
\end{array}
\]
in the variables $x,y,z\in\reals$. We introduce two new variables
$u,v\in\reals$ along with two equality constraints:
\[
u=x^2,\quad v=yz.
\]
The problem then becomes:
\[
\begin{array}{ll}
\mbox{minimize}   & xu-2xv+y+2 \\
\mbox{subject to} & x^2+y^2+z^2-1=0\\
                  & u-x^2=0\\
                  & v-yz=0,\\
\end{array}
\]
which is now a QCQP in the variables $x,y,z,u,v\in\reals$.

\paragraph{Box-constrained mixed-integer quadratic programming.}
Mixed-integer quadratic programming (MIQP) is the problem of
optimizing a quadratic function over a polyhedron, where some variables
are constrained to be integer-valued.
Typically, MIQP comes with a box constraint that specifies lower and upper bounds on $x$,
in the form of $l \le x \le u$. Formally, it can be written as the following:
\BEQ\label{eq-miqp}
\begin{array}{ll}
\mbox{minimize} & f_0(x) \\
\mbox{subject to} & Ax \le b \\
& l \le x \le u \\
& x_1, \ldots, x_p \in \integers.
\end{array}
\EEQ
We can write the integer constraints as a set of nonconvex quadratic inequalities.
For example, $x_1 \in \{l_1, l_1 + 1, \ldots, u_1\}$ if and only if $l_1 \le x_1 \le u_1$ and
\[
(x_1-k)(x_1-(k+1)) \ge 0,
\]
for all $k = l_1, l_1+1, \ldots, u_1-1$.
By replacing the integer constraints in this way, we can write~\eqref{eq-miqp} in the form of~\eqref{qcqp-formulation}.

\paragraph{Rank-constrained problems.}
Let $X \in \reals^{p \times q}$ be a matrix-valued variable.
The rank constraint $\Rank(X) \le k$
can be written as a quadratic constraint
by introducing auxiliary matrix variables
$U \in \reals^{p \times k}$ and $V \in \reals^{k \times q}$,
and adding an equality constraint $X = UV$.
Note that this is a set of $pq$ equality constraints that are quadratic in the elements of $X$, $U$, and $V$:
\[
x_{ij} = \sum_{r=1}^k u_{ir} v_{rj}, \qquad i = 1, \ldots, p, \quad j = 1, \ldots, q.
\]


\subsection{Applications}\label{s-applications}

\paragraph{Boolean least squares.}
The Boolean least squares problem has the following form:
\BEQ
\label{eq-boolean-ls}
\begin{array}{ll}
\mbox{minimize} & \|Ax-b\|_2^2\\
\mbox{subject to} & x_i  \in \{-1,1\}, \quad i=1, \ldots, n,
\end{array}
\EEQ
in the variable $x\in\reals^n$, where $A \in \reals^{m \times n}$ and $b \in \reals^m$.
This is a basic problem in digital
communications (maximum likelihood estimation for digital
signals).
By writing the Boolean constraint $x_i \in \{-1,1\}$ as $x_i^2 = 1$, we get a QCQP equivalent to~\eqref{eq-boolean-ls}:
\[
\begin{array}{ll}
\mbox{minimize} & x^T A^TA x -2 b^TAx +b^Tb\\
\mbox{subject to} & x_i^2 = 1, \quad i=1, \ldots, n.
\end{array}
\]

\paragraph{Two-way partitioning problems.}
The two-way partitioning problem can be written as the following~\cite[\S5.1.5]{boyd2004convex}:
\BEQ
\label{eq-partitioning}
\begin{array}{ll}
\mbox{maximize}   & x^T W x \\
\mbox{subject to} & x_i^2 = 1, \quad i=1,\ldots,n,
\end{array}
\EEQ
with variable $x \in \reals^n$, where $W\in\reals^{n \times n}$ is symmetric. This problem
is directly a nonconvex QCQP of the form~\eqref{qcqp-formulation}.
(Notice, however, that this is a maximization problem,
equivalent to minimizing the negative of the objective.)
Since the constraints restrict the possible values of each $x_i$ to $+1$ or $-1$,
each feasible $x$ naturally corresponds to the partition
\[
\{ 1, \ldots, n \} = \{ i \;|\; x_i = -1 \} \cup \{ i \;|\;
x_i = +1 \}.
\]
The matrix coefficient $W_{ij}$ can be interpreted as the utility
of having the elements $i$ and $j$ in the same cluster, with
$-W_{ij}$ the utility of having $i$ and $j$ in different clusters.
Then, problem~\eqref{eq-partitioning} can be interpreted
as finding the partition that maximizes the total utility over all pairs of elements.

It is possible to generalize this formulation to $k$-way partitioning problems.
In this variant, we would like to partition the set $\{1, \ldots, n\}$ into $k$ clusters,
where there is utility associated between every pair of elements not belonging to the same cluster:
\[
\begin{array}{ll}
\mbox{maximize}   & \sum_{u=1}^k \sum_{v=1}^k x^{(u)T} W^{(u, v)} x^{(v)} \\
\mbox{subject to} & x^{(u)}_i (x^{(u)}_i - 1) = 0, \quad i=1,\ldots,n, \quad u=1, \ldots, k \\
& \sum_{u=1}^k x^{(u)}_i = 1, \quad i=1,\ldots,n.
\end{array}
\]
Here, the matrices $W^{(u,v)}$ describe utility between clusters $u$ and $v$.
The variables $x^{(u)}$ can be considered as indicator variables that represent which elements belong to cluster $u$.
The first set of equality constraints limit each $x_i^{(u)}$ to be either $0$ or $1$.
The second set of equality constraints states that element $i$ belongs to exactly one of the $k$ clusters.

\paragraph{Maximum cut.}
The maximum cut problem is a classic problem in graph theory and network optimization, and is an instance of two-way partitioning problem~\eqref{eq-partitioning}.
On an $n$-node graph $G = (V, E)$, where the nodes are numbered
from $1$ to $n$, we define weights $W_{ij}$
associated with each edge $(i,j) \in E$. If no edge connects $i$ and $j$, we define $W_{ij}=0$.
The maximum cut problem seeks to find a cut
of the graph with the largest possible weight,
\ie, a partition of
the set of nodes $V$ in two clusters $V_1$ and $V_2$ such that the total
weight of all edges linking these clusters is maximized.
Given an assignment $x \in \{-1, +1\}^n$
of nodes to the clusters, the value of the cut
is defined by
\[
\frac{1}{2}\sum_{i, j: x_i x_j=-1} W_{ij},
\]
which is also equal to
\[
\frac{1}{4}\sum_{i, j} W_{ij}(1-x_ix_j) = -(1/4) x^T W x + (1/4) \ones^T W \ones.
\]
Here, $\ones$ represents a vector with all components equal to one.
The maximum cut problem, then, can be written as:
\BEQ\label{maxcut-statement}
\begin{array}{ll}
\mbox{maximize} & -(1/4) x^T W x + (1/4) \ones^T W \ones \\
\mbox{subject to} & x_i^2 = 1, \quad i = 1, \ldots, n.
\end{array}
\EEQ
We can further rewrite~\eqref{maxcut-statement} so that the objective function is homogeneous in $x$. Let $L$ be the Laplacian matrix of the underlying graph $G$, which is given by
\[
L_{ij} = \left\{
\begin{array}{ll}
-W_{ij} & \mbox{if }i \ne j \\
\sum_{j \ne i} W_{ij} & \mbox{otherwise.}
\end{array}
\right.
\]
If the edge weights $W_{ij}$ are all nonnegative, the Laplacian $L$ is positive semidefinite (see, \eg,~\cite[\S13.1]{godsil2013algebraic}).
Using the Laplacian (and ignoring the constant factor), the maximum cut problem can be written in the same form as~\eqref{eq-partitioning}:
\BEQ\label{maxcut-homogeneous}
\begin{array}{ll}
\mbox{maximize}   & x^T L x \\
\mbox{subject to} & x_i^2 = 1, \quad i=1,\ldots,n.
\end{array}
\EEQ

\label{bisection-intro}
The \emph{maximum graph bisection problem} is a
variant of the maximum cut problem,
which has an additional constraint that the two
clusters $V_1$ and $V_2$ must have the same size, \ie, $\ones^T x = 0$. (In this variant, we assume that $n$ is even.)

\paragraph{Maximum clique.}
The maximum clique problem is to find the complete subgraph of the maximum cardinality in a given graph. The problem can be formulated as a QCQP:
\BEQ\label{eq-maxclique}
\begin{array}{ll}
\mbox{maximize} & \ones^T x \\
\mbox{subject to} & x_i x_j (1 - A_{ij}) = 0, \quad i,j = 1, \ldots, n \\
& x_i (x_i - 1) = 0, \quad i = 1, \ldots, n.
\end{array}
\EEQ
Here, $A \in \{0, 1\}^{n \times n}$ is the adjacency matrix of the given graph, where $A_{ii}$ is defined as $1$ for $i=1, \ldots, n$, for convenience. The first set of constraints can be interpreted as: ``if nodes $i$ and $j$ are both in the clique, \ie, $x_i = x_j = 1$, then they must be connected by an edge, \ie, $A_{ij} = 1$.''

\paragraph{3-satisfiability.}
The 3-satisfiability (3-SAT) problem is an NP-complete problem, which is to find an assignment to a set of Boolean variables $x_1, \ldots, x_n$ that makes a given logical expression true. The given expression is a conjunction of $r$ logical expressions called \emph{clauses}, each of which is a disjuction of three variables, with optional negations. This can be formulated as a quadratically constrained feasibility problem as the following:
\BEQ\label{sat}
\begin{array}{ll}
\mbox{find} & x \\
\mbox{subject to} & Ax + b \ge \ones \\
& x_i (x_i - 1) = 0, \quad i=1, \ldots, n.
\end{array}
\EEQ
Note that this is a feasibility problem, and thus an arbitrary objective function can be optimized (such as $\ones^T x$).
Here, the matrix $A \in \reals^{r \times n}$ and vector $b \in \reals^r$ encode the $r$ clauses in the following way:
\[
A_{ij} = \left\{
\begin{array}{ll}
1 & \mbox{if the }i\mbox{th clause includes }x_i \\
-1 & \mbox{if the }i\mbox{th clause includes the negation of }x_i \\
0 & \mbox{otherwise.}
\end{array}
\right.
\]
Without loss of generality, we assume that no clause contains a variable and its negation at the same time, because such a clause can be left out without changing the problem. Each $b_i$ is set as the number of negated variables in the $i$th clause. For example, the inequality corresponding to a clause $(x_1 \vee \neg x_4 \vee x_6)$ would be
\[
x_1 + (1-x_4) + x_6 \ge 1.
\]


\paragraph{Phase retrieval.}
The phase retrieval problem is to recover a general signal such as an image from the magnitude of its Fourier transform.
There are many application areas of the phase retrieval problem, including, but not limited to: X-ray crystallography, diffraction imaging, optics, astronomical imaging, and microscopy~\cite{elser2003phase,waldspurger2015phase,candes2015phase,candes2015phase2,netrapalli2013phase,qian2015phase,huang2016phase,huang2016convexity,shechtman2015phase}.
While there are different ways to formulate the problem, we give one in the form of a feasibility problem:
\[
\begin{array}{ll}
\mbox{find} & x \\
\mbox{subject to} & (a_i^T x)^2 = b_i, \quad i=1, \ldots, m.
\end{array}
\]
Here, the optimization variable is $x \in \reals^n$, and the problem data are $a_1, \ldots, a_m \in \reals^n$ and $b_1, \ldots, b_m \in \reals$.

\paragraph{Multicast downlink transmit beamforming.}
In the context of communications, the downlink beamforming problem seeks to design a multiple-input multiple-output wireless communication system that minimizes the total power consumption,
while guaranteeing that the users receive certain signal-to-interference noise ratio.
(For more details, refer to~\cite{gershman2010convex,mutapcic2007tractable,sidiropoulos2006transmit,yang2011multiuser,luo2010semidefinite,phan2009spectrum,gopalakrishnan2015high,tran2014conic,sidiropoulos2006transmit}.)
This problem can be formulated as a QCQP:
\BEQ\label{eq-mimo}
\begin{array}{ll}
\mbox{minimize} & \|x\|_2^2 \\
\mbox{subject to} & x^T P_i x \ge 1, \quad i = 1, \ldots, m,
\end{array}
\EEQ
in the variable $x \in \reals^n$, where each $P_i \in \reals^{n \times n}$ is positive semidefinite.

\paragraph{Power system state estimation.}
In power systems, fundamental laws such as Ohm's and Kirchhoff's laws dictate
that all power quantities in power systems are quadratic functions of the voltages.
Many power engineering problems, therefore, are naturally written as QCQPs~\cite{wang2016power,zamzam2016beyond}.
For example, the power system state estimation problem can be formulated as a weighted nonlinear least squares problem:
\[
\begin{array}{ll}
\mbox{minimize} & \sum_{i=1}^m w_i (x^T P_i x - v_i)^2,
\end{array}
\]
in the variable $x \in \reals^n$, where $P_i \in \reals^{n \times n}$ and $v, w \in \reals^m$ are given data.
This is a polynomial optimization problem, which can be reformulated as a QCQP.

\section{Relaxations and bounds}\label{s-relax}

A \emph{relaxation} of an optimization problem is obtained by taking a set of
constraints and replacing it with a different set of constraints, such that
the resulting feasible set contains the feasible set of the original problem.
Relaxations have a property that the optimal value $f^\mathrm{rl}$ gives a
lower bound on the optimal value $f^\star$ of the original problem. Tractable
relaxations are of particular interest, since we can solve them to compute a lower
bound on $f^\star$ of intractable optimization problems. While a solution
$x^\mathrm{rl}$ of a relaxation is generally infeasible in the original
problem, it can still serve as a reasonable starting point of various local
methods (which we discuss in~\S\ref{s-methods}).
But if $x^\mathrm{rl}$ is feasible in the original problem, then
it is also a solution of the original problem.

Our main goal of the section is to implement the \emph{Suggest} method of Algorithm~\ref{master-algo}
via various tractable relaxations of~\eqref{qcqp-formulation}.
As a byproduct, we also get a lower bound on $f^\star$.

\subsection{Spectral relaxation}\label{s-spectral}
First, we explore a relaxation of~\eqref{qcqp-formulation}
that is a generalized eigenvalue problem,
hence the name \emph{spectral relaxation}.
This method generalizes eigenvalue bounds studied in~\cite{delorme1993laplacian,delorme1993performance,poljak1995solving}.
Let $\lambda \in \reals^m_+$ be an arbitrary vector in the nonnegative orthant, and consider the following optimization problem:
\BEQ\label{eq-spectral}
\begin{array}{ll}
\mbox{minimize} & f_0(x) \\
\mbox{subject to} & \sum_{i=1}^m \lambda_i f_i(x) \le 0.
\end{array}
\EEQ
Since $\lambda$ is elementwise nonnegative, every feasible point $x$ of~\eqref{qcqp-formulation} is also feasible in~\eqref{eq-spectral}.
Thus,~\eqref{eq-spectral} is a relaxation of~\eqref{qcqp-formulation},
and its optimal value $f^\mathrm{rl}$ is a lower bound on $f^\star$.
Since~\eqref{eq-spectral} is a QCQP with one constraint~\eqref{eq-oneqcqp-general},
it is a tractable problem that can be solved using methods via matrix pencil,
for example~\cite{feng2012duality,more1993generalizations,polik2007survey}.
It is easy to see that the same idea extends to problems with equality constraints.
Below, we derive and show an explicit bound on $f^\star$ for some examples.

\paragraph{Two-way partitioning problem.}
Let us derive a spectral relaxation of~\eqref{eq-partitioning}.
Note that via relaxation, we are seeking an upper bound on $f^\star$ as opposed to a lower bound, since~\eqref{eq-partitioning} is a maximization problem.
With $\lambda = \ones$, the relaxation is:
\[
\begin{array}{ll}
\mbox{maximize} & x^T W x \\
\mbox{subject to} & \sum_{i=1}^n x_i^2 = \|x\|_2^2 = n.
\end{array}
\]
It is easy to see that a solution $x^\mathrm{rl}$ of this relaxation is given by
\[
x^\mathrm{rl} = \sqrt{n} v,
\]
where $v$ is the eigenvector of unit length
corresponding to the maximum eigenvalue $\lambda_{\max}$ of $W$.
With this value of $x^\mathrm{rl}$, we have $f^\mathrm{rl} = n \lambda_{\max}$,
which is an upper bound on $f^\star$.

We also note that by taking the sign of each entry of $x^\mathrm{rl}$, we get a feasible point $z$ of~\eqref{eq-partitioning}.
Then, evaluating the objective $z^T W z$ gives a lower bound on $f^\star$.
Thus, we get both lower and upper bounds on $f^\star$,
simply by finding the largest eigenvalue and the corresponding eigenvector of $W$.
In~\S\ref{s-methods}, we revisit this idea, and discuss various other methods for finding feasible points of general QCQPs in greater detail.


\paragraph{Multicast downlink transmit beamforming.}
The spectral relaxation of~\eqref{eq-mimo} with $\lambda = \ones$ is:
\[
\begin{array}{ll}
\mbox{minimize} & \|x\|_2^2 \\
\mbox{subject to} & x^T \bar{P} x \ge 1.
\end{array}
\]
Here, $\bar{P} = (1/m) (\sum_{i=1}^m P_i)$.
Let $\lambda_{\max}$ be the largest eigenvalue of $\bar{P}$, and $v$ be the corresponding eigenvector.
Except in the pathological case where $\bar{P} = 0$, we have $\lambda_{\max} > 0$.
By observing that the shortest vector $x^\mathrm{rl}$ satisfying the constraint must be a multiple of $v$, we get
\[
x^\mathrm{rl} = (1/\sqrt{\lambda_{\max}}) v, \quad
f^\mathrm{rl} = 1/\lambda_{\max}.
\]

It is also possible to find a feasible point $z \in \reals^n$ from $x^\mathrm{rl}$.
Let
\[
t = \min_i {(x^\mathrm{rl})^T P_i x^{\mathrm{rl}}},
\]
and set
\[
z = (1/t) x^\mathrm{rl}.
\]
It is easy to check that $\hat{x}$ is a feasible point of~\eqref{eq-mimo}.
By evaluating the objective at this point, we get an upper bound on $f^\star$:
\[
\|z\|_2^2 = (1/t^2) \|x^\mathrm{rl}\|_2^2 = \frac{1}{t^2 \lambda_{\max}}.
\]

\subsection{Lagrangian relaxation}
The Lagrangian relaxation provides another way to find a lower bound on $f^\star$,
and it can be considered as a generalization of the spectral relaxation~\eqref{eq-spectral}.
See~\cite[\S5]{boyd2004convex} or~\cite[\S5]{bertsekas1999nonlinear}
for more background on Lagrangian duality results.
In this section, we derive the Lagrangian relaxation of~\eqref{qcqp-formulation},
which has been studied since the 1980s by Shor and others~\cite{shor1987quadratic}.
To simplify notation, we first define, for $\lambda \in \reals^m_+$,
\[
\tilde{P}(\lambda) = P_0 + \sum_{i=1}^m \lambda_i P_i, \qquad
\tilde{q}(\lambda) = q_0 + \sum_{i=1}^m \lambda_i q_i, \qquad
\tilde{r}(\lambda) = r_0 + \sum_{i=1}^m \lambda_i r_i.
\]
The \emph{Lagrangian} of~\eqref{qcqp-formulation} is given by
\[
\mathcal{L}(x, \lambda) = f_0(x) + \sum_{i=1}^m \lambda_i f_i(x) = x^T \tilde{P}(\lambda) x + \tilde{q}(\lambda)^T x + \tilde{r}(\lambda).
\]
The \emph{Lagrangian dual function} is then
\[
g(\lambda) = \inf_x \mathcal{L}(x, \lambda)
=\left\{
\begin{array}{ll}
\tilde{r}(\lambda) - (1/4)\tilde{q}(\lambda)^T \tilde{P}(\lambda)^\dagger \tilde{q}(\lambda) \quad & \mbox{if }\tilde{P}(\lambda) \succeq 0, \enspace \tilde{q}(\lambda) \in \mathcal{R}(\tilde{P}(\lambda)) \\
-\infty & \mbox{otherwise,}
\end{array}
\right.
\]
where $\tilde{P}(\lambda)^\dagger$ and $\mathcal{R}(\tilde{P}(\lambda))$ denote
the Moore--Penrose pseudoinverse and range of $\tilde{P}(\lambda)$, respectively~\cite[\S A.5]{boyd2004convex}.
Using the Schur complement, we can write the Lagrangian dual problem as a semidefinite program (SDP):
\BEQ\label{sdp-dual}
\begin{array}{ll}
\mbox{maximize} & \alpha \\
\mbox{subject to} & \lambda_i \ge 0, \quad i=1, \ldots, n \\
& \left[ \begin{array}{cc}
\tilde{P}(\lambda) & (1/2)\tilde{q}(\lambda) \\
(1/2)\tilde{q}(\lambda)^T & \tilde{r}(\lambda) - \alpha
\end{array} \right] \succeq 0,
\end{array}
\EEQ
with variables $\lambda \in \reals^n$, $\alpha \in \reals$.

Lagrangian relaxation~\eqref{sdp-dual} and spectral
relaxation~\eqref{eq-spectral} are closely related techniques.
We can show that solving the Lagrangian relaxation~\eqref{sdp-dual} is equivalent to finding the value of
$\lambda$ that achieves the best spectral bound.
This property also implies a natural way of obtaining a candidate point $x^\mathrm{rl}$;
we first solve~\eqref{sdp-dual} to obtain a solution $\lambda^\star$.
Then, using the value of $\lambda^\star$, we solve the spectral relaxation~\eqref{eq-spectral}.
Its solution can be taken as a candidate point $x^\mathrm{rl}$.
\begin{theorem}
Let $d_\lambda$ be the optimal value of~\eqref{eq-spectral} for a given $\lambda \in \reals^m_+$, and $d^\star$ be the optimal value of~\eqref{sdp-dual}. Then,
\[
d^\star = \sup_{\lambda \in \mathbf{R}^m_+} d_\lambda.
\]
\end{theorem}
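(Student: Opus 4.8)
The plan is to exploit the fact that the spectral relaxation~\eqref{eq-spectral} is itself a single-constraint QCQP, so that the strong-duality result for~\eqref{eq-oneqcqp-general} (the $\mathcal{S}$-procedure, invoked earlier in the excerpt) applies to it exactly. Granting that result, the entire argument reduces to a reparametrization of the dual multipliers.

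First I would fix $\lambda \in \reals^m_+$ and examine the spectral relaxation~\eqref{eq-spectral}. Its single constraint $\sum_{i=1}^m \lambda_i f_i(x) \le 0$ is quadratic and the objective $f_0$ is quadratic, so~\eqref{eq-spectral} is an instance of the one-constraint problem~\eqref{eq-oneqcqp-general}. Introducing a single multiplier $\mu \ge 0$ for its lone constraint, the Lagrangian is $f_0(x) + \mu \sum_{i=1}^m \lambda_i f_i(x)$, and since $\mu \sum_i \lambda_i f_i = \sum_i (\mu\lambda_i) f_i$, its infimum over $x$ is exactly $g(\mu\lambda)$ in the notation of the dual function $g$ defined above. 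By the strong-duality result for single-constraint QCQPs, the optimal value $d_\lambda$ of~\eqref{eq-spectral} equals its own Lagrangian dual value, giving
\[
d_\lambda = \sup_{\mu \ge 0} g(\mu \lambda).
\]

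Next I would recall that the SDP~\eqref{sdp-dual} is precisely the Lagrangian dual of the original problem~\eqref{qcqp-formulation}, obtained from $g$ via the Schur complement, so that $d^\star = \sup_{\nu \in \reals^m_+} g(\nu)$. Taking the supremum of the displayed identity over all $\lambda \in \reals^m_+$ and collapsing the iterated supremum into a joint one yields
\[
\sup_{\lambda \in \reals^m_+} d_\lambda = \sup_{\lambda \in \reals^m_+} \sup_{\mu \ge 0} g(\mu \lambda) = \sup_{\mu \ge 0,\, \lambda \in \reals^m_+} g(\mu \lambda).
\]
The key observation is that the map $(\mu,\lambda) \mapsto \mu\lambda$ carries $\{(\mu,\lambda) : \mu \ge 0,\ \lambda \in \reals^m_+\}$ onto all of $\reals^m_+$ (any $\nu \in \reals^m_+$ is hit by $\mu=1$, $\lambda=\nu$, and every product $\mu\lambda$ is componentwise nonnegative), so the supremum over the product set equals $\sup_{\nu \in \reals^m_+} g(\nu) = d^\star$, establishing the claim.

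The main obstacle is the identity $d_\lambda = \sup_{\mu \ge 0} g(\mu\lambda)$ in the second step: this is exactly the assertion that strong duality holds for the nonconvex single-constraint relaxation~\eqref{eq-spectral}, which is the genuinely nontrivial content supplied by the $\mathcal{S}$-procedure. Everything else is bookkeeping, but I would take care with the degenerate cases — $\lambda = 0$, where~\eqref{eq-spectral} becomes unconstrained and both sides reduce to $g(0)$; relaxations that are infeasible or unbounded, where $d_\lambda$ may be $\pm\infty$; and any attainment or constraint-qualification hypotheses under which the cited strong-duality result is stated. Once strong duality for the one-constraint relaxation is accepted as given in the excerpt, the remaining reparametrization $\nu = \mu\lambda$ is routine.
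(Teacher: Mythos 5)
Your proposal is correct and follows essentially the same route as the paper's proof: both invoke strong duality (the $\mathcal{S}$-procedure) for the single-constraint relaxation~\eqref{eq-spectral}, and then collapse the iterated supremum over $(\mu,\lambda)$ into a single supremum over $\reals^m_+$ via the reparametrization $\nu = \mu\lambda$. The only cosmetic difference is that the paper performs this substitution at the level of the explicit SDP dual~\eqref{eq-spectral-dual}, eliminating its scalar multiplier $\eta$ by replacing $\eta\lambda$ with $\lambda$, whereas you work directly with the Lagrangian dual function $g$.
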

\begin{proof}
We first use the fact that strong duality holds for~\eqref{eq-spectral}.
The dual problem of~\eqref{eq-spectral} can be derived in the same way we derived~\eqref{sdp-dual}:
\BEQ\label{eq-spectral-dual}
\begin{array}{ll}
\mbox{maximize} & \gamma \\
\mbox{subject to} & \eta \ge 0 \\
& \left[ \begin{array}{cc}
\tilde{P}(\eta \lambda) & (1/2)\tilde{q}(\eta \lambda) \\
(1/2)\tilde{q}(\eta \lambda)^T & \tilde{r}(\eta \lambda) - \gamma
\end{array} \right] \succeq 0,
\end{array}
\EEQ
with variables $\eta, \gamma \in \reals$.
Due to strong duality,~\eqref{eq-spectral-dual} has the same optimal value $d_\lambda$ as~\eqref{eq-spectral}.
Note that taking the supremum over $\lambda$ of $d_\lambda$ for all
possible $\lambda \in \reals^m_+$ is equivalent to
making $\lambda$ in~\eqref{eq-spectral-dual} an additional variable in $\reals^m_+$, and solving the resulting problem.
In other words, $\sup_{\lambda \in \mathbf{R}^m_+} d_\lambda$ is the optimal value of the following optimization problem:
\[
\begin{array}{ll}
\mbox{maximize} & \gamma \\
\mbox{subject to} & \eta \ge 0 \\
& \lambda_i \ge 0, \quad i=1, \ldots, n \\
& \left[ \begin{array}{cc}
\tilde{P}(\eta \lambda) & (1/2)\tilde{q}(\eta \lambda) \\
(1/2)\tilde{q}(\eta \lambda)^T & \tilde{r}(\eta \lambda) - \gamma
\end{array} \right] \succeq 0,
\end{array}
\]
with variables $\eta, \gamma \in \reals$, $\lambda \in \reals^m$.
Since both $\eta$ and $\lambda$ are constrained to be nonnegative,
we can eliminate $\eta$ by replacing $\eta \lambda$ with $\lambda$
and keeping the nonnegativity constraint on $\lambda$.
The resulting problem is then equivalent to~\eqref{sdp-dual}, which proves the identity.
\end{proof}

\subsection{Semidefinite relaxation}\label{s-sdr}
In this section, we derive a semidefinite relaxation (SDR)
using a technique called \emph{lifting}.
This SDR and the Lagrangian dual problem~\eqref{sdp-dual} are Lagrangian duals of each other.
To derive the SDR, we start by introducing a new variable $X = x x^T$ and rewriting the problem as:
\BEQ\label{qcqp-rewriting}
\begin{array}{ll}
\mbox{minimize} & F_0(X, x) = \Tr(P_0 X) + q_0^T x + r_0 \\
\mbox{subject to} & F_i(X, x) = \Tr(P_i X) + q_i^T x + r_i \le 0, \quad i=1, \ldots, m \\
& X = x x^T,
\end{array}
\EEQ
in the variables $X \in \reals^{n \times n}$ and $x \in \reals^n$.
This rewriting is called a \emph{lifting},
as we have embedded the original problem with $n$ variables to a much larger space (of $n^2+n$ dimensions).
By lifting, we obtain an additional property that
the objective and constraints are affine in $X$ and $x$,
except the last constraint $X = xx^T$ which is nonconvex.
When we replace this intractable constraint with $X \succeq x x^T$,
we get a convex relaxation.
By rewriting it using the Schur complement, we obtain an SDR:
\BEQ\label{sdp-relaxation}
\begin{array}{ll}
\mbox{minimize} & \Tr(P_0 X) + q_0^T x + r_0 \\
\mbox{subject to} & \Tr(P_i X) + q_i^T x + r_i \le 0, \quad i=1, \ldots, m \\
& Z(X, x) = \left[ \begin{array}{cc} X & x \\ x^T & 1 \end{array} \right] \succeq 0.
\end{array}
\EEQ
The optimal value $f^\mathrm{sdr}$ of problem~\eqref{sdp-relaxation} is then a lower bound on the optimal value $f^\star$ of~\eqref{qcqp-formulation}.
Under mild assumptions (\eg, feasibility of the original problem),~\eqref{sdp-relaxation} has the same optimal value as~\eqref{sdp-dual}.
If an optimal point $(X^\star, x^\star)$ of~\eqref{sdp-relaxation} satisfies $X^\star = x^\star x^{\star T}$
(or equivalently, the rank of $Z(X^\star, x^\star)$ is one),
then $x^\star$ is a solution of the original problem~\eqref{qcqp-formulation}.


It may look like we have created too much slack by lifting the problem to a higher-dimensional space,
followed by relaxing the equality constraint $X = xx^T$ to a semidefinite cone constraint.
However, we can show that the SDR is tight when the original problem is convex.
\begin{lemma}\label{l-feasible}
Let $(X, x)$ be any feasible point of~\eqref{sdp-relaxation}. Then, $x$ satisfies every convex constraint of~\eqref{qcqp-formulation}.
\end{lemma}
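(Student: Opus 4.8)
The plan is to exploit the single nonlinear link between the lifted variable $X$ and the original variable $x$ that survives in the relaxation, namely the block semidefinite constraint $Z(X,x) \succeq 0$. A constraint of~\eqref{qcqp-formulation} is convex exactly when its matrix $P_i \succeq 0$, so it suffices to show that for every index $i$ with $P_i \succeq 0$, feasibility of $(X,x)$ in~\eqref{sdp-relaxation} forces $f_i(x) \le 0$.

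First I would translate the positive semidefiniteness of the block matrix $Z(X,x)$ into a statement about $X$ alone. Since its bottom-right entry equals $1 > 0$, the Schur complement condition gives $Z(X,x) \succeq 0$ if and only if $X - xx^T \succeq 0$. Thus every feasible point of~\eqref{sdp-relaxation} satisfies $X \succeq xx^T$; this is the one place where the relaxed constraint (rather than the exact $X = xx^T$) enters, and it is precisely the slack we must control.

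The key step is the elementary fact that the trace of a product of two positive semidefinite matrices is nonnegative. Applying it to the pair $P_i \succeq 0$ and $X - xx^T \succeq 0$ yields $\Tr\!\left(P_i (X - xx^T)\right) \ge 0$, that is, $x^T P_i x \le \Tr(P_i X)$. I expect this to be the crux: the inequality points in the favorable direction \emph{only} because $P_i \succeq 0$, whereas for an indefinite $P_i$ the sign of $\Tr(P_i(X-xx^T))$ is not determined, which is exactly why the lemma must be restricted to convex constraints.

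Finally I would chain the inequalities. Combining $x^T P_i x \le \Tr(P_i X)$ with the linear feasibility $\Tr(P_i X) + q_i^T x + r_i \le 0$ gives $f_i(x) = x^T P_i x + q_i^T x + r_i \le \Tr(P_i X) + q_i^T x + r_i \le 0$, so $x$ satisfies the $i$th convex constraint. The main obstacle is essentially conceptual rather than computational: recognizing that convexity of the constraint is precisely what makes the PSD-trace inequality usable in the needed direction. Everything else is bookkeeping with the Schur complement.
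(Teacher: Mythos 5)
Your proposal is correct and follows essentially the same route as the paper's proof: extract $X \succeq xx^T$ from the feasibility of $Z(X,x) \succeq 0$ (the paper does this implicitly, you via the Schur complement), apply the PSD-trace inequality $\Tr(P_i X) \ge x^T P_i x$ using $P_i \succeq 0$, and chain with the linear feasibility constraint. No gaps; your remarks on why convexity of the constraint is essential match the role it plays in the paper's argument.
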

\begin{proof}
It is enough to verify the claim for any arbitrary convex constraint $x^T P x + q^T x + r \le 0$ with $P \succeq 0$. Since $(X, x)$ is a feasible point of~\eqref{sdp-relaxation}, we have $X \succeq xx^T$. Then,
\[
\Tr(PX) \ge \Tr(P xx^T) = x^T P x.
\]
Therefore,
\[
x^T P x + q^T x + r \le \Tr (P X) + q^T x + r.
\]
Thus, if the righthand side is nonpositive, so is the lefthand side. The claim follows.
\end{proof}

\begin{lemma}
If problem~\eqref{qcqp-formulation} is convex, then its SDR is tight, \ie, $f^\star = f^\mathrm{sdr}$.
\end{lemma}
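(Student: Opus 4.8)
The plan is to prove the reverse inequality $f^\star \le f^\mathrm{sdr}$, since the relaxation automatically gives $f^\mathrm{sdr} \le f^\star$, and then combine the two to conclude equality. The key observation is that convexity of~\eqref{qcqp-formulation} means $P_0 \succeq 0$ and every $P_i \succeq 0$, so \emph{every} constraint is a convex constraint and the objective is a convex quadratic. This lets me apply the mechanism behind Lemma~\ref{l-feasible} not just to the constraints but also to the objective.

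First I would take an \emph{arbitrary} feasible point $(X, x)$ of the SDR~\eqref{sdp-relaxation}, rather than an optimal one, to avoid any question of whether the SDR optimum is attained. Since every constraint of~\eqref{qcqp-formulation} is convex, Lemma~\ref{l-feasible} shows that $x$ satisfies all of them; hence $x$ is feasible in the original problem and $f_0(x) \ge f^\star$. Next I would lower-bound the SDR objective by $f_0(x)$ using the same trace inequality that drives Lemma~\ref{l-feasible}: the constraint $Z(X,x) \succeq 0$ is equivalent via the Schur complement to $X \succeq x x^T$, so $P_0 \succeq 0$ gives
\[
\Tr(P_0 X) \ge \Tr(P_0 x x^T) = x^T P_0 x,
\]
and therefore
\[
\Tr(P_0 X) + q_0^T x + r_0 \ge x^T P_0 x + q_0^T x + r_0 = f_0(x) \ge f^\star.
\]

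Finally, since this chain holds for every feasible $(X,x)$, I would take the infimum over the SDR feasible set to obtain $f^\mathrm{sdr} \ge f^\star$. Together with $f^\mathrm{sdr} \le f^\star$ this yields $f^\star = f^\mathrm{sdr}$. I do not expect a genuine obstacle here: the entire argument is a reuse of the inequality $\Tr(PX) \ge x^T P x$ from Lemma~\ref{l-feasible}, applied once to all constraints (to certify feasibility of $x$) and once to the objective (to lower-bound the relaxed cost). The only point requiring a little care is to argue at the level of arbitrary feasible points, which cleanly sidesteps attainment issues for the SDR optimum.
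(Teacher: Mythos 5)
Your proof is correct and takes essentially the same route as the paper's: both rest on Lemma~\ref{l-feasible} to certify that $x$ is feasible in the original problem, plus the trace inequality $\Tr(P_0 X) \ge x^T P_0 x$ (from $X \succeq xx^T$ and $P_0 \succeq 0$) to compare objective values, which is exactly the paper's step of replacing $X$ by $xx^T$ at no increase in cost. Your one deviation is a small tidying rather than a different method: by running the inequality chain over an arbitrary feasible $(X,x)$ and taking the infimum, you sidestep the attainment of the SDR optimum that the paper implicitly assumes when it asserts the existence of an optimal point with $X = xx^T$.
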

\begin{proof}
Lemma~\ref{l-feasible} implies that if~\eqref{qcqp-formulation} is convex, then for any feasible point $(X, x)$ of~\eqref{sdp-relaxation}, replacing $X$ with $xx^T$ gives another feasible point with lower or equal objective value, \ie, there exists an optimal point of~\eqref{sdp-relaxation} that satisfies $X = xx^T$. Thus, adding an additional constraint $X = xx^T$ to~\eqref{sdp-relaxation},
which makes the problem equivalent to~\eqref{qcqp-rewriting},
does not change the set of solutions, nor the optimal value.
\end{proof}

While the SDR bound is not tight in general, in some cases,
it is possible to give a lower bound on $f^\mathrm{sdr}$ in terms of $f^\star$,
which effectively gives both lower and upper bounds on $f^\star$.
The famous result by~\cite{goemans1995maxcut} guarantees a data-independent approximation factor of $0.87856$ to the maximum cut problem.
Their analysis is based on the fact that the
SDR~\eqref{sdp-relaxation} can be interpreted
as a stochastic version
of~\eqref{qcqp-formulation}~\cite{bertsimas1999semidefinite,luo2010semidefinite}.
This interpretation gives a natural probability distribution to sample points from,
which can be used to implement a randomized \emph{Suggest} method in step 1 of Algorithm~\ref{master-algo}.
There are other problem instances where similar approximation factors can be given.
These approximation factors, however, greatly depend on the underlying problems.
We refer the readers to~\cite{luo2010semidefinite} for a summary of some major results on SDR approximation factors.

\label{page-random}
Let $(X^\star, x^\star)$ be any optimal
solution of~\eqref{sdp-relaxation}. Suppose $x \in \reals^n$ is a Gaussian
random variable with mean $\mu$ and covariance $\Sigma$. Then, $\mu =
x^\star$ and $\Sigma = X^\star - x^\star x^{\star T}$ solve the following
problem of minimizing the expected value of a quadratic form, subject to
quadratic inequalities:
\[
\begin{array}{ll}
\mbox{minimize} & \Expect f_0(x)
= \Tr (P_0 \Sigma) + f_0(\mu) \\
\mbox{subject to} & \Expect f_i(x)
= \Tr (P_i \Sigma) + f_i(\mu)
\le 0, \quad i = 1, \ldots, m \\
& \Sigma \succeq 0,
\end{array}
\]
in variables $\mu \in \reals^n$ and $\Sigma \in \reals^{n\times n}$.
Intuitively, minimizing the expected objective value promotes $\mu$ to move
closer to the minimizer of $f_0$ and $\Sigma$ to be ``small,'' the constraints
counteract this and promote $\Sigma$ to be ``large'' enough that the
constraints hold in expectation.
Since the constraints are satisfied only in expectation, there is no guarantee
that sampling points directly from $\mathcal{N}(\mu, \Sigma)$ gives feasible points
of~\eqref{qcqp-formulation} at all.
In particular, if~\eqref{qcqp-formulation} includes an equality constraint, then it will almost certainly fail.
However, recall that the \emph{Suggest} methods do not require feasibility;
sampling candidate points from $\mathcal{N}(\mu, \Sigma)$ is
a reasonable choice for the \emph{Suggest} method.
These candidate points then serve as good starting points
for the \emph{Improve} methods, which we discuss in detail in~\S\ref{s-methods}.

In certain cases, the probabilistic interpretation also allows us to get a hard
bound on the gap between the optimal values $f^\star$ of the original problem~\eqref{qcqp-formulation}
and $f^\mathrm{sdr}$ of its SDR~\eqref{sdp-relaxation}.
A classic example is that of the maximum cut bound described
in~\cite{goemans1995maxcut}, which states that for an undirected graph with nonnegative edge weights, the SDR of the maximum cut problem attains the following bound:
\[
\alpha f^\mathrm{sdr} \le f^\star \le f^\mathrm{sdr},
\]
for $\alpha \approx 0.87856$. (Note that this is a maximization problem.)

Here, we derive a similar bound
of $\alpha = 2/\pi \approx 0.63661$. This result, also known as Nesterov's $\pi/2$ theorem,
extends the result of~\cite{goemans1995maxcut} to any two-way partitioning problem with $W \succeq 0$.
Note that we do not require the off-diagonal entries of $W$ to be nonpositive, which is necessary to get the stronger bound of $\alpha \approx 0.87856$ in~\cite{goemans1995maxcut}.
\begin{theorem}[Nesterov~\cite{nesterov1998global,nesterov1998semidefinite}]
Let $f^\mathrm{sdr}$ be the optimal value of the SDR of~\eqref{eq-partitioning}, where $W \succeq 0$. Then,
\[
\frac{2}{\pi} f^\mathrm{sdr} \le f^\star \le f^\mathrm{sdr}.
\]
\end{theorem}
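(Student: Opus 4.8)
The plan is to prove the two inequalities separately. The upper bound $f^\star \le f^\mathrm{sdr}$ is immediate: the SDR is a relaxation, and since~\eqref{eq-partitioning} is a maximization problem, the relaxed optimal value is an upper bound. Concretely, any feasible $x$ of~\eqref{eq-partitioning} gives a feasible point $(xx^T, x)$ of the SDR with the same objective value, so $f^\mathrm{sdr} \ge f^\star$. All the real work is in the lower bound $\frac{2}{\pi} f^\mathrm{sdr} \le f^\star$, which I would establish by Goemans--Williamson randomized hyperplane rounding combined with a semidefiniteness argument.

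First I would take an optimal solution $X^\star$ of the SDR. For~\eqref{eq-partitioning} the SDR constraints are $X_{ii} = 1$ and $X \succeq 0$, so $X^\star$ is a correlation matrix and admits a Gram factorization $X^\star_{ij} = v_i^T v_j$ with unit vectors $v_1, \ldots, v_n$. I would then draw a standard Gaussian vector $r$ and round via $x_i = \mathrm{sign}(r^T v_i)$, which is automatically feasible for~\eqref{eq-partitioning} since $x_i^2 = 1$. The key computation is the expected rounded objective: using the classical identity $\Expect[\mathrm{sign}(r^T v_i)\,\mathrm{sign}(r^T v_j)] = \frac{2}{\pi}\arcsin(v_i^T v_j)$ (which follows from the hyperplane-separation probability $\theta_{ij}/\pi$ and $\arcsin + \arccos = \pi/2$), I obtain
\[
\Expect[x^T W x] = \sum_{i,j} W_{ij}\,\Expect[x_i x_j] = \frac{2}{\pi}\Tr\!\big(W\,\arcsin(X^\star)\big),
\]
where $\arcsin$ is applied entrywise.

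The crux---and the step I expect to be the main obstacle---is showing this expectation is at least $\frac{2}{\pi} f^\mathrm{sdr} = \frac{2}{\pi}\Tr(W X^\star)$, which reduces to the matrix inequality $\arcsin(X^\star) \succeq X^\star$ (entrywise $\arcsin$ applied to $X^\star$, minus $X^\star$, is PSD). I would prove this from the Taylor expansion $\arcsin(t) = \sum_{k\ge 0} c_k\, t^{2k+1}$ with all $c_k \ge 0$, so that $\arcsin(X^\star) - X^\star = \sum_{k\ge 1} c_k\,(X^\star)^{\circ(2k+1)}$ is a nonnegative combination of Hadamard powers of $X^\star$. By the Schur product theorem each Hadamard power of a PSD matrix is PSD, so the combination is PSD; together with $W \succeq 0$ this yields $\Tr\big(W(\arcsin(X^\star) - X^\star)\big) \ge 0$. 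This is precisely where the hypothesis $W \succeq 0$ is used, and where the argument diverges from the $0.878$ maximum-cut bound, which instead relies on sign conditions on the off-diagonal $W_{ij}$.

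Finally I would conclude by a standard averaging argument: since $\Expect[x^T W x] \ge \frac{2}{\pi}\Tr(W X^\star) = \frac{2}{\pi} f^\mathrm{sdr}$ and every realization of $x$ is feasible for~\eqref{eq-partitioning}, there must exist a realization with $x^T W x \ge \frac{2}{\pi} f^\mathrm{sdr}$, and hence $f^\star \ge \frac{2}{\pi} f^\mathrm{sdr}$.
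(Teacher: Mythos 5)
Your proposal is correct and takes essentially the same route as the paper: round an optimal SDR solution by taking signs of a Gaussian (your Gram-vector hyperplane rounding is distributionally identical to the paper's $z = \sign(x)$ with $x \sim \mathcal{N}(0, X^\star)$), apply the identity $\Expect[z_i z_j] = \tfrac{2}{\pi}\arcsin(X^\star_{ij})$, and combine $\arcsin(X^\star) \succeq X^\star$ with $W \succeq 0$ before averaging. The only difference is that you prove the key inequality $\arcsin(X^\star) \succeq X^\star$ from the nonnegative Taylor coefficients and the Schur product theorem, whereas the paper simply cites it from Ben-Tal and Nemirovski.
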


\begin{proof}
The SDR of~\eqref{eq-partitioning} is:
\[
\begin{array}{ll}
\mbox{maximize}   & \Tr(W X) \\
\mbox{subject to} & X \succeq 0 \\
                  & X_{ii} = 1, \quad i=1,\ldots,n.
\end{array}
\]
Let $X^\star$ be any solution of the SDR, and $f^\mathrm{sdr} = \Tr(W X^\star)$ be the optimal value of the SDR.
Consider drawing $x$ from the Gaussian distribution $\mathcal{N}(0, X^\star)$, and setting $z = \sign(x)$,
where $\sign(\cdot)$ denotes the elementwise sign function.
Note that $z$ is always a feasible point of~\eqref{eq-partitioning}.
The special form of the objective function allows us to find the expected value of the objective $\Expect (z^T W z)$ analytically:
\[
\Expect(z^T W z)
=\sum_{i,j} W_{ij} \Expect(z_i z_j)
=\frac{2}{\pi}\sum_{i,j} W_{ij} \arcsin(X^\star_{ij})
=\frac{2}{\pi}\Tr(W \arcsin X^\star),
\]
where $\arcsin X^\star$ is a matrix obtained by taking elementwise $\arcsin$ of the entries of $X^\star$.
Since $\arcsin(X^\star)\succeq X^\star$ (see, \eg,~\cite[\S3.4.1.6]{ben2001lectures}) and $W$ is positive semidefinite, we get
\[
\Expect(z^T W z)
= \frac{2}{\pi}\Tr(W \arcsin X^\star)
\ge \frac{2}{\pi}\Tr(W X^\star)
= \frac{2}{\pi} f^\mathrm{sdr}.
\]
On the other hand, since $z$ is always feasible, we have
\[
\Expect(z^T W z) \le f^\star.
\]
Together with the fact that the relaxation attains the optimal value $f^\mathrm{sdr}$ no worse than $f^\star$, \ie, $f^\mathrm{sdr} \ge f^\star$, we have
\[
\frac{2}{\pi} f^\mathrm{sdr} \le \Expect(z^T W z) \le f^\star \le f^\mathrm{sdr}.
\]
\end{proof}

Note that this result not only bounds $f^\mathrm{sdr}$ in terms of $f^\star$, but also
shows an explicit procedure for generating feasible points that have the
expected objective value of at least $(2/\pi) f^\mathrm{sdr}$.
In~\S\ref{s-special-methods}, we revisit this procedure in the context of \emph{Improve} methods.
In practice, sampling
just a few points is enough to obtain a feasible solution that exceeds this
theoretical lower bound.

\subsection{Tightening relaxations}\label{s-valid}
Lower bounds obtained from relaxations can be improved by adding additional quadratic
inequalities to~\eqref{qcqp-formulation} that are satisfied by any solution
of the original problem.
In particular, redundant inequalities that hold for all feasible
points of~\eqref{qcqp-formulation} can still tighten the relaxation.
We note, however, that in order for these inequalities to be useful in practice,
they must be computationally efficient to derive.
For example, the inequality $f_0(x) \le f^\star$ holds for every
optimal point of the problem, but it cannot be added to the problem without knowing the value of $f^\star$.
All the valid inequalities we discuss below, therefore, will be restricted to the ones that can be derived efficiently.

Consider the set of affine inequalities $Ax \le b$ in~\eqref{qcqp-full}. For every vector $x$ satisfying $Ax \le b$, we have
\BEQ\label{shore-ineq}
(Ax - b)(Ax - b)^T = A x x^T A^T - Axb^T - bx^T A + bb^T \ge 0,
\EEQ
where the inequality is elementwise. Each entry of the lefthand side has the form
\[
(a_i^T x - b_i)(a_j^T x - b_j) = x^T a_i a_j^T x - (b_i a_j + b_j a_i)^T x + b_i b_j,
\]
where $a_i \in \reals^n$ is the $i$th row of $A$ (considered as a column vector). These indefinite quadratic inequalities then can be added to the original QCQP~\eqref{qcqp-full} without changing the set of solutions.

In certain special cases, there are other valid inequalities
that can be derived directly from the structure of the feasible set.
For example, take any Boolean problem where the feasible set is given by $\{-1, +1\}^n$.
Since the entries of any feasible $x$ are integer-valued, for any $a \in \integers^n$ and $b \in \integers$, we have
\[
(a^T x - b)(a^T x - (b+1)) \ge 0.
\]
While these are redundant inequalities, adding them to the problem can tighten its relaxations.

This technique can be generalized further;
any exclusive-disjunction of two affine inequalities can be encoded as a quadratic inequality.
Let $a^T x \le b$ and $c^T x \le d$ be two affine inequalities such that for every feasible point of~\eqref{qcqp-formulation},
exactly one of them holds. Then,
\[
(a^T x - b)(c^T x - d) \le 0
\]
is a redundant quadratic inequality that holds for every feasible point $x$.

\subsection{Relaxation of relaxations}

The Lagrangian and semidefinite relaxations~\eqref{sdp-dual}
and~\eqref{sdp-relaxation} are polynomial-time solvable, but in practice, can
be expensive to solve as the dimension of the problem gets larger.
In this section, we explore several ways to further relax the relaxation
methods discussed above to obtain lower bounds on $f^\star$ more efficiently.


We first discuss how to further relax the Lagrangian relaxation~\eqref{sdp-dual}.
Weak duality implies that it is not necessary to solve~\eqref{sdp-dual} optimally in order to obtain a lower bound on $f^\star$;
any feasible point $(\lambda, \alpha)$ of~\eqref{sdp-dual} induces a lower bound on $f^\star$.
We note that $\alpha$ is easy to optimize given $\lambda$.
In fact, when $\lambda$ is fixed, optimizing over $\alpha$
is equivalent to solving the spectral relaxation~\eqref{eq-spectral} with the same value of $\lambda$.


Now, we discuss relaxation methods for the SDR~\eqref{sdp-relaxation}.
Note that the semidefinite constraint $Z(X, x) \succeq 0$ can be written as
an infinite collection of affine constraint $a^T Z(X, x) a \ge 0$ for all $a \in \reals^{n+1}$ of unit length, \ie, $\|a\|_2 = 1$.
For example, if $a$ is the $i$th unit vector, the resulting inequality states that $X_{ii}$ must be nonnegative.
To approximate the optimal value $f^\mathrm{sdr}$ of~\eqref{sdp-relaxation}, one can generate affine inequalities to replace the semidefinite constraint and solve the resulting linear program (LP).
While these affine inequalities can come directly from the valid inequalities we discussed in~\S\ref{s-valid},
it is also possible to adopt a cutting-plane method to generate them incrementally.
\filbreak
\begin{algdesc}\label{alg-cutting}
\emph{Cutting-plane method for solving~(\ref{sdp-relaxation}) via LP relaxation.}
\begin{tabbing}
{\bf given} an optimization problem $\mathcal{P}$ of
the form~(\ref{sdp-relaxation}). \\*[\smallskipamount]
1.\ \emph{Initialize.} Add, for each $i=1, \ldots, m$, constraint $F_i(X, x) \le 0$ to $\mathcal{T}$,
the list of constraints.\\
{\bf repeat} \\
\qquad \= 2.\ Solve~\eqref{sdp-relaxation} with constraints $\mathcal{T}$ and get an optimal point $(X^\star, x^\star)$.\\
\> 3.\ {\bf if} $X^\star \succeq x^\star x^{\star T}$, terminate with solution $(X^\star, x^\star)$.\\
\> 4.\ Otherwise, find $a \in \reals^{n+1}$ such that $a^T Z(X^\star, x^\star) a < 0$.\\
\> 5.\ Add constraint $a^T Z(X, x) a \ge 0$ to $\mathcal{T}$.
\end{tabbing}
\end{algdesc}
In step 4, vector $a \in \reals^{n+1}$ satisfying the condition always exists,
because $Z(X^\star, x^\star) \succeq 0$ is equivalent to $X^\star \succeq x^\star x^{\star T}$.
For example, one can always take $a$ equal to the eigenvector corresponding to any negative eigenvalue of $Z(X^\star, x^\star)$.
It is also possible to adapt the LDL factorization algorithm to verify
whether $Z(X^\star, x^\star)$ is positive semidefinite,
and terminate with a suitable vector $a$ in case it is not.
We note that Algorithm~\ref{alg-cutting}, in general, need not converge, unless additional constraints are met,
\eg, the vector $a$ in step 4 is the eigenvector corresponding to the minimum eigenvalue of $Z(X^\star, x^\star)$.
However, at every iteration of step 2, we get a lower bound on $f^\star$ that is no worse than the value from the previous iteration.
In practice, this means that the algorithm can terminate any time when a good enough lower bound is obtained.

It is also possible to write a second-order cone programming (SOCP) relaxation of~\eqref{sdp-relaxation}, which can be thought of as a middle-point between LP and SDP relaxations~\cite{kim2001second}.
Second-order cone (SOC) constraints are more general than affine constraints, and can encode more sophisticated relations that hold for positive semidefinite matrices.
For example, in order for $Z(X, x)$ to be positive semidefinite, every $2$-by-$2$ principal submatrices of it must be positive semidefinite, \ie, for every pair of indices $i$ and $j$,
\[
\left[ \begin{array}{cc} Z_{ii} & Z_{ij} \\ Z_{ji} & Z_{jj} \end{array} \right] \succeq 0,
\]
or equivalently,
\[
Z_{ij}^2 \le Z_{ii} Z_{jj},
\quad Z_{ii} \ge 0,
\quad Z_{jj} \ge 0.
\]
These three inequalities are also equivalent to the following SOC inequalities~\cite[\S4]{boyd2004convex}:
\BEQ\label{eq-soc-rewriting}
\left\| \left[ \begin{array}{c} 2Z_{ij} \\ Z_{ii}-Z_{jj} \end{array} \right] \right\|_2 \le Z_{ii} + Z_{jj},
\quad Z_{ii} \ge 0,
\quad Z_{jj} \ge 0.
\EEQ
In general, in order for $Z(X, x)$ to be positive semidefinite, we must have, for every $(n+1)$-by-$2$ matrix $A$,
\[
A^T Z(X, x) A \succeq 0,
\]
which can be rewritten as SOC inequalities, just like~\eqref{eq-soc-rewriting}.
From this observation, it is simple to adapt Algorithm~\ref{alg-cutting} to solve~\eqref{sdp-relaxation} via SOCP relaxation.

\section{Local methods}\label{s-methods}
In this section, we implement the \emph{improve} method of Algorithm~\ref{master-algo}
using various local methods.
These methods take an arbitrary point $x \in \reals^n$ that is
not necessarily feasible in~\eqref{qcqp-formulation},
and attempts to find a better point $z \in \reals^n$.
Recall that in~\S\ref{s-intro}, we defined better points in terms of maximum constraint violation and objective value.
In general, finding a feasible point is an NP-hard problem,
for otherwise we can perform bisection on the optimal value of the epigraph form~\eqref{qcqp-epigraph}
and find a solution to arbitrary precision in polynomial time.
For this reason, we do not guarantee convergence of the methods or feasibility of the resulting points, except in some special cases discussed in~\S\ref{s-special-methods}.

\subsection{Special cases}\label{s-special-methods}
We start by investigating some special cases, where we can directly exploit
the problem structure and find a feasible point.
Since we are guaranteed to find a feasible point with these methods,
we also get an upper bound on $f^\star$ by evaluating the objective function
at the resulting point.
As it can be seen from the examples, these heuristics are highly problem dependent.


\paragraph{Partitioning problems.}
The feasible set of the partitioning problem~\eqref{eq-partitioning} is
\[
\mathcal{S} = \{x \,|\, x_1^2 = \cdots = x_n^2 = 1\}.
\]
For any given $x \in \reals^n$, the point $z = \sign(x)$ is always feasible.
It is easy to check that $z$ is a projection of $x$ onto $\mathcal{S}$, \ie, $z$ is the closest feasible point to $x$.

For the maximum graph bisection problem described in page~\pageref{bisection-intro},
we can employ a slightly different method by~\cite{bertsimas1999semidefinite} to satisfy the additional constraint $\ones^T x = 0$.
Given $x \in \reals^n$,
we find a feasible $z$ by setting $z_i = 1$ for indices $i$ corresponding to
the $n/2$ largest entries in $x$, and $z_i = -1$ for the other $n/2$ indices.
(Ties are broken arbitrarily when some entries of $x$ have equal values.)
It can be shown that $z$ is also a projection onto the set of feasible points.

\paragraph{Multicast downlink transmit beamforming.}
The feasible set of~\eqref{eq-mimo} is given by
\[
\mathcal{S} = \{x \,|\, x^T P_1 x \ge 1, \ldots, x^T P_m x \ge 1\}.
\]
Let $x \in \reals^n$ be an arbitrary point. By setting
\[
z = \frac{1}{\min_i x^T P_i x} x,
\]
we get a feasible point $z$~\cite{luo2010semidefinite}.
In other words, $z$ is the smallest multiple of $x$ that makes it
feasible. (Recall that every $P_i$ is positive semidefinite.)
Unlike in the case of partitioning problems,
$z$ is not a projection of $x$ onto $\mathcal{S}$, even when $m = 1$.

\paragraph{Maximum clique.}
Here, we adapt the main idea from the heuristic for the maximum graph bisection problem
and generate a feasible point of the maximum clique
problem~\eqref{eq-maxclique} from a given vector $x \in \reals^n$.

\begin{algdesc}\label{maxclique-round}
\emph{Heuristic for finding a clique from a given vector.}
\begin{tabbing}
{\bf given} $x \in \reals^n$. \\*[\smallskipamount]
1.\ Find a permutation $\pi$ of $\{1, \ldots, n\}$ such that $x_{\pi_1} \ge \cdots \ge x_{\pi_n}$.\\
2.\ \emph{Initialize empty clique.} $C := \emptyset$ and $z := (0, \ldots, 0) \in \reals^n$.\\
{\bf for} $k=1, 2, \ldots, n$ \\
\qquad \= 3.\ {\bf if} $C \cup \{\pi_k\}$ forms a clique, $C := C \cup \{\pi_k\}$ and $z_{\pi_k} := 1$. \\
4.\ {\bf return} $z$.
\end{tabbing}
\end{algdesc}
The intuition behind this heuristic is that $x_i$ with larger value
should be more likely to be included in a clique. In particular,
this heuristic always includes the node $i$ with the highest value of $x_i$.
It is clear that at the end of Algorithm~\ref{maxclique-round}, $C$ is a clique.
In addition, we can show that $C$ is a maximal clique,
\ie, there is no node $k \notin C$ such that $C \cup \{k\}$ is a clique.
\begin{lemma}
The clique generated by Algorithm~\ref{maxclique-round} is maximal.
\end{lemma}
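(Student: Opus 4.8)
The claim is that the clique $C$ produced by Algorithm~\ref{maxclique-round} is maximal, meaning no node outside $C$ can be added while keeping it a clique. The plan is to argue by contradiction: suppose $C$ is \emph{not} maximal, so there exists some node $k \notin C$ such that $C \cup \{k\}$ is still a clique. I would then track \emph{why} node $k$ was rejected by the algorithm when it was examined.

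First I would observe the key structural fact about the loop: the nodes are processed in the order $\pi_1, \pi_2, \ldots, \pi_n$, and a node $\pi_j$ is added to $C$ precisely when $C \cup \{\pi_j\}$ forms a clique at the moment $\pi_j$ is considered. Now fix the hypothetical node $k \notin C$. Since $k$ was not added, it must be that at the iteration when $k$ was examined, the set $C_{\mathrm{cur}} \cup \{k\}$ was \emph{not} a clique, where $C_{\mathrm{cur}}$ denotes the partial clique built up to that point. The central step is then to note that the clique only grows over the course of the algorithm, so $C_{\mathrm{cur}} \subseteq C$, the final clique. This monotonicity is the crux of the argument.

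From here the contradiction is immediate. If $C_{\mathrm{cur}} \cup \{k\}$ fails to be a clique, there is some node $v \in C_{\mathrm{cur}}$ that is not adjacent to $k$ (i.e. $A_{vk} = 0$). But $C_{\mathrm{cur}} \subseteq C$, so $v \in C$ as well, which means $k$ is not adjacent to a node already in the final clique $C$. Consequently $C \cup \{k\}$ cannot be a clique either, contradicting the assumption that $k$ witnesses non-maximality. Therefore no such $k$ exists, and $C$ is maximal.

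I expect the only delicate point to be stating the monotonicity of the partial clique cleanly: one must be careful to distinguish the clique $C_{\mathrm{cur}}$ at the iteration where $k$ is processed from the final clique $C$, and to note that nodes are only ever \emph{added}, never removed, so any obstruction present during $k$'s iteration persists to the end. There are no real calculations here; the proof is essentially a bookkeeping argument about the greedy insertion order, and the main obstacle is simply phrasing the ``clique never shrinks'' observation so that the rejection of $k$ transfers to the final set $C$.
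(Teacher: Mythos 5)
Your proof is correct and follows essentially the same argument as the paper: both rely on the monotonicity of the partial clique ($C_{\mathrm{cur}} \subseteq C$ at the iteration where $k$ is examined) to derive a contradiction, with the paper arguing that $k$ would have been added and you arguing that the obstruction that rejected $k$ persists in the final clique. These are the same proof read in opposite directions.
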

\begin{proof}
Assume on the contrary that the clique $C_1$ generated by
Algorithm~\ref{maxclique-round} is a proper subset of some other clique $C_2$.
Choose an arbitrary $k \in C_2 \setminus C_1$. When $k$ is considered in
Algorithm~\ref{maxclique-round}, the partial clique $C$ maintained by the
algorithm is a subset of $C_1$. Since $C_2$ is a clique and $C_1 \subset C_2$,
$C \cup \{k\}$ is a clique and thus $k$ must have been included in $C$, which
leads to a contradiction.
\end{proof}

\subsection{Coordinate descent}\label{s-greedy}

From this section on, we consider general QCQPs that have no obvious structures that
can be exploited as in~\S\ref{s-special-methods}.
First, we show a \emph{coordinate descent} heuristic for improving a given point.
Coordinate descent is a simple and intuitive method for finding a local minimum of a function.
For more results and references on general coordinate descent algorithms, see~\cite{wright2015coordinate}.

Our greedy descent method is based on the fact that one-variable
QCQPs are tractable.
The algorithm consists of two phases:

\paragraph{Phase I.}
The goal of the first phase is to reduce the maximum constraint violation
and, if possible, reach a feasible point.
Let $x \in \reals^n$ be a given candidate point.
We repeatedly cycle over each coordinate $x_j$ of $x$,
and update it to the value that minimizes the maximum constraint violation.
In other words, at each step, we solve:
\BEQ\label{eq-phaseone}
\begin{array}{ll}
\mbox{minimize} & s \\
\mbox{subject to} & f_i(x) \le s, \quad i=1, \ldots, m,
\end{array}
\EEQ
with variables $x_j, s \in \reals$.
For a fixed value of $s$, it is easy to adapt the method of Appendix~\ref{s-onevar}
to check if~\eqref{eq-phaseone} is feasible in $x_j$.
Therefore, by performing bisection on $s$,
we can find the minimum possible value of $s$ to arbitrary precision,
as well as a value of $x_j$ that attains the maximum violation of $s$.
We note that in order to apply the method of Appendix~\ref{s-onevar},
we need to extract the quadratic, linear, and constant coefficients of each $f_i$ in $x_j$.
If the $P_i$ matrices are not sparse, then
evaluating these coefficients can dominate the running time.

When the optimal value of $s$ is zero or smaller,
\ie, a feasible point is found, then phase I ends and phase II begins.
On the other hand, if the maximum constraint violation cannot be improved for any $x_j$,
then phase I terminates unsuccessfully (and needs a new candidate point).

\paragraph{Phase II.}
Phase II starts once a feasible point is found. In this phase, we restrict ourselves to feasible points only, and look for another feasible point with strictly better objective value.
Again, we cycle over each coordinate $x_j$ of $x$ and optimize the objective function while maintaining feasibility.
In other words, we solve~\eqref{qcqp-formulation} with all variables fixed but $x_j$.
The implementation of phase II is a direct application of the method in Appendix~\ref{s-onevar}.
While it is possible to run phase II until no improving direction is remaining,
it can terminate at any point and will still yield a feasible point,
as well as an upper bound on $f^\star$.

Phase II of the coordinate descent method generalizes local search
methods for many combinatorial problems, such as the 1-opt local search heuristic, which have been studied since the 1950s~\cite{croes1958method}.

\subsection{Convex-concave procedure} \label{s-dccp}

The convex-concave procedure (CCP) is a powerful heuristic method for finding a local optimum of difference-of-convex (DC) programming problems, which have the following form:
\BEQ\label{eq-dc-problem}
\begin{array}{ll}
\mbox{minimize} & f_0(x) - g_0(x) \\
\mbox{subject to} & f_i(x) - g_i(x) \le 0, \quad i=1, \ldots, m,
\end{array}
\EEQ
where $f_i : \reals^n \rightarrow \reals^n$ and
$g_i : \reals^n \rightarrow \reals^n$ for $i = 0, \ldots, m$ are convex.
We refer the readers to~\cite{lipp2016variations} for extensive review and bibliography of CCP.

The main motivation for considering CCP to solve QCQPs is that
any quadratic function can be easily rewritten as a DC expression.
Consider, for example, an indefinite quadratic expression: $x^T P x + q^T x + r$.
We decompose the matrix $P$ into
the difference of two positive semidefinite matrices:
\BEQ\label{quadratic-split}
P=P_+ - P_-,\qquad P_+, \ P_- \succeq 0.
\EEQ
Such decomposition is always possible, by taking, for example, $P_+ = P + tI$ and $P_- = tI$ for large enough $t > 0$.
Then, we can explicitly rewrite the expression as the difference of two convex quadratic expressions:
\[
(x^T P_+ x + q_0^T x + r_0) - x^T P_- x.
\]

Once~\eqref{qcqp-formulation} is rewritten as a DC problem,
any CCP method for locally solving DC problems can be applied.
Here, we consider the penalty CCP method in~\cite{lipp2016variations},
which does not require the initial point to be feasible.
The penalty CCP method seeks to optimize the convexified version of the objective,
with an additional penalty on the convexified constraint violations.
The method gradually increases the penalty on constraint violation over iterations.
Convexification of the functions is done by linearizing each $g_i$ around the
current iterate $x^k$:
\BEQ\label{eq-linearize}
\hat{g}_i(x; x^k) = g_i(x^k) + \nabla g_i(x^k)^T (x-x^k).
\EEQ
Formally, the penalty CCP method can be written as below.
\begin{algdesc}\label{penalty-ccp}
\emph{Penalty CCP.}
\begin{tabbing}
{\bf given} initial point $x^0$, penalty parameters $\tau_0 > 0$, $\tau_\mathrm{max}>0$, and $\mu > 1$.\\*[\smallskipamount]
{\bf for} $k=0, 1, \ldots$ \\
\qquad \= 1.\ \emph{Convexify.} According to~\eqref{eq-linearize}, form
$\hat{g}_i(x;x^k)$ for $i = 0, \ldots, m$. \\
\> 2.\ \emph{Solve.} Set the value of $x^{k+1}$ to a solution of\\
\> \qquad $\begin{array}{ll}
\mbox{minimize} & f_0(x)- \hat{g}_0(x;x^k) + \tau_k\sum_{i=1}^m s_i\\
\mbox{subject to}& f_i(x) - \hat{g}_i(x;x^k) \leq s_i,\quad i=1,\ldots,m\\
&s_i \geq 0,\quad i = 1, \ldots, m.
\end{array}$\\
\> 3.\ \emph{Increase constraint violation penalty.} $\tau_{k+1} := \min(\mu\tau_k,\tau_\mathrm{max})$.\\
{\bf until} stopping criterion is satisfied.
\end{tabbing}
\end{algdesc}
There are a number of reasonable stopping criteria~\cite{lipp2016variations}.
For example, Algorithm~\ref{penalty-ccp} can terminate when a feasible point is found,
or the maximum penalty parameter is reached, \ie, $\tau_k = \tau_{\max}$.
When implementing Algorithm~\ref{penalty-ccp},
there are other factors to consider than
the initial point or the penalty parameters;
the performance of the algorithm can vary
depending on how indefinite quadratic
functions are split into the difference of
two convex quadratic expressions.
In Appendix~\ref{s-split}, we discuss various ways
of splitting quadratic functions into convex
parts.


\subsection{Alternating directions method of multipliers}\label{s-admm}
The alternating directions method of multipliers (ADMM) is
an operator splitting algorithm that is originally devised to solve
convex optimization problems~\cite{boyd2011distributed}.
However, due to the flexibility of the ADMM framework, it has been explored as a heuristic to solve nonconvex problems. (See, \eg,~\cite[\S9]{boyd2011distributed}, or~\cite{derbinsky2013improved,huang2016consensus}.)
Here, we consider an adaptation of the algorithm to the QCQP, as considered in~\cite{huang2016consensus}.
Note that due to nonconvexity of QCQPs, typical convergence results on ADMM do not apply.

Let $\mathcal{C} \subseteq \reals^n$ be a given set,
and consider the following variant of~\eqref{qcqp-formulation}:
\BEQ\label{qcqp-full2}
\begin{array}{ll}
\mbox{minimize} & f_0(x) \\
\mbox{subject to} & f_i(x) \le 0, \quad i=1, \ldots, m \\
& x \in \mathcal{C}.
\end{array}
\EEQ
With $\mathcal{C} = \reals^n$, problem~\eqref{qcqp-full2}
is the same as~\eqref{qcqp-formulation}.
In this section, we use~\eqref{qcqp-full2} as our primary problem formulation
rather than~\eqref{qcqp-formulation},
and handle the last constraint $x \in \mathcal{C}$
differently from the other constraints.

To apply ADMM, we take~\eqref{qcqp-full2} and form an equivalent
problem with a consensus constraint:
\[
\begin{array}{ll}
\mbox{minimize} & f_0(z) + \mathcal{I}_\mathcal{C} (z)
+ \sum_{i=1}^m \mathcal{I}_i(x_i) \\
\mbox{subject to} & z = x_1 = \cdots = x_m,
\end{array}
\]
with variables $z, x_1, x_2, \ldots, x_m \in \reals^n$.
It is important to note that $x_i$ does not represent
the $i$th component of $x$; rather,
it represents the $i$th copy of the variable $x$,
all of which should be equal to each other.
The function $\mathcal{I}_\mathcal{C}$ is the $0$--$\infty$ indicator of the set $\mathcal{C}$:
\[
\mathcal{I}_\mathcal{C}(z) =
\left\{
\begin{array}{ll}
0 & z \in \mathcal{C} \\
\infty & z \notin \mathcal{C}.
\end{array}
\right.
\]
Similarly, $\mathcal{I}_i$ is the $0$--$\infty$ indicator function of the constraint $f_i(x) \le 0$:
\[
\mathcal{I}_i(x) =
\left\{
\begin{array}{ll}
0 & f_i(x) \le 0 \\
\infty & f_i(x) > 0.
\end{array}
\right.
\]
The augmented Lagrangian of the problem is:
\[
\mathcal{L}_\rho (z, x_1, \ldots, x_m, u_1, \ldots, u_m) = f_0(z) + \mathcal{I}_\mathcal{C} (z) + \sum_{i=1}^m \left( \mathcal{I}_i (x_i) + \rho \left(\|z-x_i+u_i\|_2^2 - \|u_i\|_2^2 \right) \right),
\]
where each $u_i \in \reals^n$ are \emph{scaled dual variables}~\cite[\S3.1.1]{boyd2011distributed}.
The \emph{penalty parameter} $\rho > 0$ controls the convergence behavior or ADMM.
We postpone the discussion on how to choose the parameter,
and describe the ADMM iteration first:
\BEAS
z^{k+1} &=& \argmin_z \mathcal{L}_\rho (z, x_1^k, \ldots, x_m^k, u_1^k, \ldots, u_m^k),\\
x_i^{k+1} &=& \argmin_{x_i} \mathcal{L}_\rho (z^{k+1}, \ldots, x_{i-1}^{k+1}, x_i, x_{i+1}^k, \ldots, x_m^k, u_1^k, \ldots, u_m^k), \quad i=1, \ldots, m,\\
u_i^{k+1} &=& u_i^k + z^{k+1} - x^{k+1}_i, \quad i=1, \ldots, m.
\EEAS

The update rules for $z$ and $x$ each involves solving an optimization problem,
but some observations allow us to simplify them.
To simplify the $z$-update, we ignore the terms in the augmented Lagrangian that do not depend on $z$.
Then, $z^{k+1}$ is given by the solution of the following QCQP:
\BEQ\label{admm-z-update}
\begin{array}{ll}
\mbox{minimize} & f_0(z) + \rho \sum_{i=1}^m \|z-(x^k_i-u^k_i)\|_2^2 \\
\mbox{subject to} & z \in \mathcal{C}.
\end{array}
\EEQ
in the variable $z \in \reals^n$.
There are a number of cases where~\eqref{admm-z-update} is tractable.
\begin{itemize}
\item
When the penalty parameter $\rho > 0$ satisfies
\[
\lambda_{\min} + m \rho \ge 0,
\]
where $\lambda_{\min}$ is the smallest eigenvalue of $P_0$,
then~\eqref{admm-z-update} is a convex problem and thus is tractable.
\item
If the constraint $z \in \mathcal{C}$ can be written as a single quadratic inequality constraint,
then~\eqref{admm-z-update} is a QCQP with a single constraint~\eqref{eq-oneqcqp-general},
which is tractable regardless of convexity of $\mathcal{C}$ or the value of $\rho$.
\item
When $\mathcal{C} = \reals^n$ and the penalty parameter $\rho$ is chosen such that the objective function is strictly convex, \ie,
\[
\lambda_{\min} + m \rho > 0,
\]
then we can perform the $z$-update by solving a single linear system.
To see this, rewrite the objective function of~\eqref{admm-z-update} as
\[
z^T \tilde{P} z + 2 \tilde{q}^T z + \tilde{r},
\]
with appropriate $\tilde{P} \in \reals^{n \times n}$, $\tilde{q} \in \reals^n$, and $\tilde{r} \in \reals$.
Since $\tilde{P} \succ 0$, the minimizer of the expression above can be found
by simply setting the gradient with respect to $z$ equal to zero:
\[
2\tilde{P} z + 2\tilde{q} = 0,
\]
or equivalently,
\[
\tilde{P}z = -\tilde{q}.
\]
Assuming that the value of $\rho$ is fixed for every iteration,
the coefficient matrix $\tilde{P}$ on the lefthand side stays the same for every $z$-update.
Using this observation, we can perform the $z$-update
efficiently as follows: we compute the factorization of the coefficient matrix $\tilde{P}$ once,
and for every subsequent $z$-update, use the cached factorization to carry out the back-solve.
The improvement in running time is the most significant when $\tilde{P}$ is dense,
as the cost of factorization is $O(n^3)$,
and each back-solve only takes $O(n^2)$ time.
\end{itemize}

The $x$-updates can be simplified as well by ignoring terms that do not depend on $x_i$. Then, $x_i^{k+1}$ is given by the solution of the following QCQP:
\BEQ\label{admm-x-update}
\begin{array}{ll}
\mbox{minimize} & \|x_i - (z^{k+1}+u^k_i) \|_2^2 \\
\mbox{subject to} & f_i(x_i) \le 0,
\end{array}
\EEQ
with variable $x_i \in \reals^n$.
Note that~\eqref{admm-x-update} is a special form
of~\eqref{eq-oneqcqp-general}, which is tractable.
We cover the solution methods for solving~\eqref{admm-x-update} in Appendix~\ref{s-oneqcqp}.
Since~\eqref{admm-x-update} only depends on $z^{k+1}$ and $u^k_i$, all $x$-updates trivially parallelizes over $i = 1, \ldots, m$.

Below, we discuss several extensions of the ADMM-based \emph{Improve} method.

\paragraph{Equality constraints.}
When some constraints are equality constraints,
then only the corresponding $x$-updates need to be modified accordingly.
For example, if the $i$th constraint is an equality constraint $f_i(x) = 0$,
then in order to perform the $x$-update, we would solve
\[
\begin{array}{ll}
\mbox{minimize} & \|x_i - (z^{k+1}+u^k_i)\|_2^2 \\
\mbox{subject to} & f_i(x_i) = 0,
\end{array}
\]
instead of~\eqref{admm-x-update}.

\paragraph{Convex constraints.}
Note that any number of convex constraints of~\eqref{qcqp-formulation} can be
encoded in the constraint $x \in \mathcal{C}$.
In general, this will change the behavior of the algorithm, including how quickly the $z$-update~\eqref{admm-z-update} can be performed.

\paragraph{Two-phase ADMM.}
While the ADMM update rules take a simple form and are easy to implement,
it is still a heuristic applied to a nonconvex problem,
and thus in practice, it may be more important to set
the initial point and the penalty parameter $\rho$
carefully.
Here, we show an adaptation of the two-phase ADMM method by Huang and Sidiropoulos~\cite{huang2016consensus},
that can attain faster convergence in practice.
In phase I, much like the two-phase coordinate descent algorithm introduced in~\S\ref{s-greedy},
the algorithm first focuses on finding a feasible point by ignoring the objective function.
Once a feasible point is found, phase II begins. In phase II, the objective function is
brought back into consideration and ADMM iterations
are performed until convergence.

The only difference of the two phases is that in phase I,
the objective function $f_0$ is completely ignored.
This simplifies the $z$-update of phase I; in order to perform the $z$-update,
we solve the following optimization problem:
\BEQ\label{admm-z-update-feasibility}
\begin{array}{ll}
\mbox{minimize} & \sum_{i=1}^m \|z-(x^k_i-u^k_i)\|_2^2 \\
\mbox{subject to} & z \in \mathcal{C},
\end{array}
\EEQ
with variable $z \in \reals^n$.
Define
\[
\bar{z} = \frac{1}{m}\sum_{i=1}^m (x^k_i-u^k_i).
\]
The solution of~\eqref{admm-z-update-feasibility} is simply
given by the projection of $\bar{z}$ onto $\mathcal{C}$.
As long as $\mathcal{C}$ is convex, the projection can be found efficiently.
The $x$- and $u$-update rules stay the same.
Notice that the new $z$-update rule is independent of $\rho$.
In other words, the ADMM iterates in phase I
are completely determined by the initial point.

\label{admm-loop}
We note that depending on the initialization,
the iterates in phase I can be stuck in an infinite loop
of period greater than 1, even for very small problems.
Consider a $3$-dimensional two-way partitioning problem~\eqref{eq-partitioning}, which has three equality constraints
\[
x_i^2 = 1, \quad i=1, 2, 3.
\]
Consider the following initial points:
\BEAS
z^0   &=& (1/3) \ones, \\
x_1^0 &=& (-1, 1/3, 1/3), \\
x_2^0 &=& (1/3, -1, 1/3), \\
x_3^0 &=& (1/3, 1/3, -1), \\
u_1^0 &=& (2/3, 0, 0), \\
u_2^0 &=& (0, 2/3, 0), \\
u_3^0 &=& (0, 0, 2/3).
\EEAS
For this particular initialization, we get
\[
z^{k+1} = -z^k, \quad x^{k+1}_i = -x^k_i, \quad u^{k+1}_i = -u^k_i,
\]
and thus the iterates repeat themselves with period $2$.
It can be verified that any initialization close to this will also fail to converge to a feasible point.

\section{Implementation}\label{s-package}
We introduce an open source Python package \texttt{QCQP} that accepts
high-level description QCQPs as input and implements Algorithm~\ref{master-algo}.
As our main platform, we used \texttt{CVXPY}, a domain-specific language for convex optimization~\cite{cvxpy}.
The source code repository for \texttt{QCQP} is available at \url{https://github.com/cvxgrp/qcqp}.

\subsection{Quadratic expressions}
The \texttt{CVXPY} parser determines curvature, sign, and monotonicity of expressions according to the \emph{disciplined convex programming} (DCP) rules~\cite{grant2006disciplined}.
We have extended the parser so that it can also determine quadraticity of an expression, based on the rules that are similar to the DCP rules.
The following list of expressions is directly recognized as quadratic by \texttt{CVXPY}:
\begin{itemize}
	\item any constant or affine expression
	\item any affine transformation of a quadratic expression, \eg, the sum of quadratic expressions
	\item product of two affine expressions
	\item elementwise square \verb|power(X, 2)| or \verb|square(X)|, with affine \verb|X|
	\item \verb|sum_squares(X)| with affine \verb|X|, representing $\sum_{ij} X_{ij}^2$.
	\item \verb|quad_over_lin(X, c)| with affine \verb|X| and positive constant \verb|c|, representing $(1/c) \sum_{ij} X_{ij}^2$
	\item \verb|matrix_frac(x, P)| with affine \verb|x| and symmetric constant \verb|P|, representing $x^T P^{-1} x$
	\item \verb|quad_form(x, P)| with affine \verb|x| and symmetric constant \verb|P|, representing $x^T P x$
\end{itemize}

\subsection{Constructing problem and applying heuristics}\label{s-constructing}
Our implementation can handle QCQPs constructed using the standard \texttt{CVXPY} syntax.
As long as the objective function and both sides of the constraints are quadratic, the problem is accepted even when it is nonconvex.
In order to apply the \emph{Suggest-and-Improve} framework,
a \texttt{CVXPY} problem object must be passed to the \texttt{QCQP} constructor first.
For example, if \verb,problem, is a \texttt{CVXPY} problem object, then the following code checks whether \verb,problem, describes a QCQP, and if so, prepares the \emph{Suggest} and \emph{Improve} methods:
\begin{Verbatim}
qcqp = QCQP(problem)
\end{Verbatim}
Once the \verb,qcqp, object is constructed, a number of different \emph{Suggest} and \emph{Improve} methods can be invoked on it.
Currently, three \emph{Suggest} methods are available:
\begin{itemize}
\item \verb,qcqp.suggest(), or \verb,qcqp.suggest(RANDOM), fills the values
of the variables using independent and identically distributed Gaussian random variables.
\item \verb,qcqp.suggest(SPECTRAL), fills the values of the variables with a solution of the spectral relaxation~\eqref{eq-spectral}.
The spectral lower bound (or upper bound, in the case of a maximization problem) on the optimal value $f^\star$ is accessible via \verb,qcqp.spectral_bound,.
\item \verb,qcqp.suggest(SDR), solves the SDR~\eqref{sdp-relaxation} and
fills the values of the variables according to the probabilistic interpretation discussed in page~\pageref{page-random}.
The SDR bound on the optimal value is accessible via \verb,qcqp.sdr_bound,.
\end{itemize}
Below is a list of available \emph{Improve} methods:
\begin{itemize}
\item \verb,qcqp.improve(COORD_DESCENT), performs the two-stage coordinate descent method, as described in~\S\ref{s-greedy}.
\item \verb,qcqp.improve(DCCP), rewrites the problem in the DC form~\eqref{eq-dc-problem} then runs the penalty CCP method in~\S\ref{s-dccp} using the open source Python package \texttt{DCCP}.
\item \verb,qcqp.improve(ADMM), runs the two-phase ADMM, as described in~\S\ref{s-admm}.
\end{itemize}
As mentioned in~\S\ref{s-algorithm}, composition of any number of \emph{Improve} methods is also an \emph{Improve} method.
It is easy to apply a sequence of \emph{Improve} methods by passing a list of methods to \verb|improve()|:
\begin{Verbatim}
qcqp.improve(method_sequence)
\end{Verbatim}
This is equivalent to:
\begin{Verbatim}
for method in method_sequence:
    qcqp.improve(method)
\end{Verbatim}
Various parameters can be supplied to the \emph{Suggest} and \emph{Improve} method,
such as the penalty parameter $\rho$ of the two-phase ADMM,
penalty parameter $\tau$ of the penalty CCP,
maximum number of iterations,
and tolerance value for determining near-zero quantities.
All \emph{Suggest} and \emph{Improve} methods return a pair $(f_0(x), v(x))$,
\ie, the objective value and maximum constraint violation at the current point $x$.

\subsection{Sample usage}\label{s-usage}
In this section, we show a sample usage of the \texttt{QCQP} package with a small two-way partitioning problem~\eqref{eq-partitioning} with $n = 10$.
We start by importing the necessary packages, and constructing a symmetric matrix $W \in \reals^{n \times n}$:
\begin{Verbatim}
import numpy as np, cvxpy as cvx
from qcqp import *

n = 10
W0 = np.random.randn(n, n)
W = 0.5*(W0 + W0.T)
\end{Verbatim}
For clarity, we imported \texttt{NumPy} and \texttt{CVXPY} with explicit namespaces.
Next, we construct a \texttt{CVXPY} problem instance describing~\eqref{eq-partitioning}.
\begin{Verbatim}
x = cvx.Variable(n)
prob = cvx.Problem(
    cvx.Maximize(cvx.quad_form(x, W)),
    [cvx.square(x) == 1]
)
\end{Verbatim}
While \texttt{CVXPY} allows defining \verb|prob|, it is not possible to invoke the \verb|solve()| method on it because
the objective function is not concave, and the constraints are nonconvex.
However, we can pass it as an argument to the \texttt{QCQP} constructor to indicate that \verb|prob| is a QCQP:
\begin{Verbatim}
qcqp = QCQP(prob)
\end{Verbatim}
Now we can call \verb|suggest()| and \verb|improve()| methods on \verb|qcqp|.
Here, we solve the spectral relaxation~\eqref{eq-spectral} with $\lambda = \ones$,
for which both the optimal value and solution are known.
Although it is not necessary, we use the MOSEK solver~\cite{mosek} for robustness.
\begin{Verbatim}
qcqp.suggest(SPECTRAL, solver=cvx.MOSEK)
\end{Verbatim}
This fills \verb|x.value|, the numerical value of \verb|x|, with the solution of the spectral relaxation.
The spectral bound can be accessible via \verb|qcqp.spectral_bound|.
We print out this value and compare it with $f^\mathrm{rl} = n \lambda_{\max}$ shown in~\S\ref{s-spectral}.
\begin{Verbatim}
print ("Spectral bound: %.4f" % qcqp.spectral_bound)

(w, v) = np.linalg.eig(W)
print ("n*lambda_max:   %.4f" % (max(w)*n))
\end{Verbatim}
Indeed, we see that both values coincide:
\begin{Verbatim}
Spectral bound: 31.2954
n*lambda_max:   31.2954
\end{Verbatim}
We can also verify that the value \verb|x.value| is correctly populated
with the (scaled) eigenvector of $W$ corresponding to its maximum eigenvalue.
Next, we apply the two-phase coordinate descent heuristic on \verb|x.value|,
and print out the objective value and maximum constraint violation at the resulting point.
\begin{Verbatim}
f_cd, v_cd = qcqp.improve(COORD_DESCENT)
print ("Objective: %.4f" % f_cd)
print ("Maximum violation: %.4f" % v_cd)
\end{Verbatim}
In this example, we get a feasible point:
\begin{Verbatim}
Objective: 23.1687
Maximum violation: 0.0000
\end{Verbatim}
We can print out \verb|x.value| and check that every coordinate is indeed $\pm 1$.
Since $n$ is small, we can enumerate every one of $2^n= 1024$ feasible points
and verify that \verb|x.value| is a global solution of the problem.
See Appendix~\ref{s-source-codes} for the full version of the script.

\section{Numerical examples}\label{s-experiment}
In this section, we consider two numerical examples and
perform the heuristics implemented in the \texttt{QCQP} package.
In Appendix~\ref{s-source-codes}, we give sample Python script for
writing these example problems as QCQPs and applying the
\emph{Suggest-and-Improve} framework using our package.
More examples can be found in the package repository.

\paragraph{Specialized methods.}
We note that any kind of method tailored to particular problems,
\eg, maximum cut~\eqref{maxcut-statement},
multicast beamforming~\eqref{eq-mimo}, or other well-known problems described in~\S\ref{s-examples},
will almost certainly perform better than \texttt{QCQP}, and our objective is not to compete with these specialized methods.
The primary goal of the package, instead, is to provide an easily accessible interface
to various heuristics for NP-hard QCQPs that do not have specialized methods.

\paragraph{Running time.}
Currently, \texttt{QCQP} supports minimal parallelism and
introduces computational overhead by explicitly representing
quadratic expressions by their coefficient matrices.
The implementation can be improved further by better exploiting parallelism,
and rewriting parts of the codes in low-level languages such as C.
Further optimizing the performance of the heuristics is left as future work.
The reported running times are CPU times (measured using the Python \verb,time.clock(), function) based on experiments performed
on a 3.40 GHz Intel Xeon machine, running Ubuntu 16.04.

\subsection{Boolean least squares}\label{s-exp-bool-ls}
Here, we consider the Boolean least squares problem~\eqref{eq-boolean-ls} from~\S\ref{s-applications}.

\paragraph{Problem instance.}
We generated a random problem instance with $n = 50$ and $m = 80$,
where the entries of $A$ and $b$ are drawn IID from $\mathcal{N}(0, 1)$.
The feasible set has $2^n \approx 10^{15}$ points.

\paragraph{Results.}
We tested various combinations of \emph{Suggest} and \emph{Improve} methods.
For each combination,
we sampled 20 candidate points and improved them, and took the best point.
In this example, we considered all three different \emph{Suggest} methods in~\S\ref{s-constructing}: random, spectral, and SDR.
For the \emph{Improve} methods, we considered three methods as follows:
\begin{itemize}
	\item Round: rounding the candidate point to the nearest feasible point, as discussed in~\S\ref{s-special-methods}.
	\item CD: two-phase coordinate descent, as discussed in~\S\ref{s-greedy}.
	\item CCP: penalty CCP, as discussed in~\S\ref{s-dccp}.
\end{itemize}
While it is possible to apply multiple \emph{Improve} methods sequentially,
we did not consider them in this experiment.
We excluded the two-phase ADMM
because it easily fails to generate a feasible point
when applied to Boolean problems,
as noted in page~\pageref{admm-loop}.
For the penalty CCP heuristic, we used the penalty parameter of $\tau = 1$.

All three \emph{Improve} methods yielded a feasible point for every candidate point generated by \emph{Suggest} methods.
In Table~\ref{t-result-boolean-ls}, we show, for every combination of the \emph{Suggest} and \emph{Improve} methods,
the objective value of the best point found and the total running time
(which includes the solve time of the relaxations).

Spectral and semidefinite relaxations yielded lower bounds of $228$ and $518$, respectively.
The best feasible point found had the objective value of $988$, and it was obtained by performing coordinate descent on SDR-based candidate points.
Since the problem instance was small enough to find the global solution,
we used Gurobi~\cite{gurobi} to find the optimal value $f^\star$, which was $920$.
As pointed out in~\cite{park2017semidefinite,d2003relaxations},
this optimal value is closer to the best upper bound of $988$, rather than to the SDR bound of $518$.

Rounding a random point to the nearest feasible point is equivalent to choosing a random feasible point.
As expected, this method performed the worst: the best objective value attained was $2719$.
Significantly better points were obtained by changing the \emph{Suggest} method:
rounding the solution of the spectral relaxation gave a point with objective $1605$, and rounding the SDR-based candidate points gave the best objective of $1098$.
The performance was further improved by performing the two-phase coordinate descent \emph{Improve} method.
With two-phase coordinate descent, even random candidate points yielded a better point than rounding SDR-based candidate points.
This result is quite intuitive, because coordinate descent is a
natural heuristic choice for Boolean problems.
In fact, applying the two-phase coordinate descent in this setting reduces to
a heuristic that performs well in practice~\cite{croes1958method},
which is to round the candidate point to the nearest point (phase I),
and perform a 1-opt local search (phase II).
In this example, the penalty CCP found the same feasible point for every given candidate point,
regardless of the \emph{Suggest} method.

In terms of the running time, we found, for this problem, that there is no benefit in running
the penalty CCP or two-phase ADMM;
the two-phase coordinate descent yielded the best point faster than them.
We note that the running time of Gurobi for finding the optimal value was $1793$ seconds (CPU time).

\begin{table}
\begin{center}
\begin{tabular}{|c||c|c|c|}
\hline
\bf{Best point} & Random & Spectral & SDR  \\ \hline\hline
Round & 2719 & 1605 & 1098 \\ \hline
CD    & 1043 & 1017 &  988 \\ \hline
CCP   & 1063 & 1063 & 1063 \\ \hline
\end{tabular}
\quad
\begin{tabular}{|c||c|c|c|}
\hline
\bf{Runtime} & Random & Spectral & SDR \\ \hline\hline
Round &   0.0 &  26.2 &  41.3 \\ \hline
CD    &  67.3 &  93.8 & 199.9 \\ \hline
CCP   & 872.7 & 877.9 & 968.5 \\ \hline
\end{tabular}
\end{center}
\caption{Objective value of the best point found (left), and total running time (right) of the \emph{Suggest-and-Improve} heuristic on the Boolean least squares problem.}
\label{t-result-boolean-ls}
\end{table}

\subsection{Secondary user multicast beamforming}
We consider the secondary user multicast beamforming problem,
which is a variant of~\eqref{eq-mimo} studied in~\cite{phan2009spectrum,huang2016consensus}.
The problem can be formulated as the following:
\BEQ\label{eq-mimo-secondary}
\begin{array}{ll}
\mbox{minimize} & \|w\|_2^2 \\
\mbox{subject to} & |h_i^H w|^2 \ge \tau, \quad i=1, \ldots, m\\
& |g_j^H w|^2 \le \eta, \quad j=1, \ldots, l.
\end{array}
\EEQ
Here, $w \in \complex^n$ is the variable,
and $h_i, g_j \in \complex^n$ are given problem data,
for $i = 1, \ldots, m$ and $j = 1, \ldots, l$.
Since \texttt{CVXPY} and \texttt{QCQP} do not directly
support complex-valued variables and data, we form an equivalent problem with real numbers only. For $i = 1, \ldots, m$, let
\[
a_i = (\Re h_i, \Im h_i), \quad
b_i = (-\Im h_i, \Re h_i),
\]
be real-valued vectors in $\reals^{2n}$. (Here, $\Re z \in \reals^n$ and $\Im z \in \reals^n$ denote the real and imaginary parts of a complex-valued vector $z \in \complex^n$.)
Similarly, for $j = 1, \ldots, l$, define
\[
c_j = (\Re g_j, \Im g_j), \quad
d_j = (-\Im g_j, \Re g_j).
\]
By introducing another variable $x \in \reals^{2n}$ to represent the real and imaginary parts of $w$, we can rewrite~\eqref{eq-mimo-secondary} using real-valued variables and data only:
\[
\begin{array}{ll}
\mbox{minimize} & \|x\|_2^2 \\
\mbox{subject to}
& (a_i^T x)^2 + (b_i^T x)^2 \ge \tau, \quad i=1, \ldots, m\\
& (c_j^T x)^2 + (d_j^T x)^2 \le \eta, \quad j=1, \ldots, l.
\end{array}
\]

\paragraph{Problem instance.}
We considered a problem instance with $n = 50$, $m = 20$, and $l = 5$.
The entries of the $a_i$, $b_i$, $c_j$, and $d_j$ were drawn IID from $\mathcal{N}(0, 1)$.
The other parameters were set as $\tau = 20$ and $\eta = 2$.

\paragraph{Results.}
As in~\S\ref{s-exp-bool-ls}, we used a combination of \emph{Suggest} and \emph{Improve} methods.
The list of \emph{Improve} method we used for~\eqref{eq-mimo-secondary} is the following:
\begin{itemize}
  \item Scale: scaling the candidate point so that it satisfies the first set of constraints,
  \[
  (a_i^T x)^2 + (b_i^T x)^2 \ge \tau, \quad i=1, \ldots, m,
  \]
  as discussed in~\S\ref{s-special-methods}.
  Unlike~\eqref{eq-mimo}, this problem has a second set of constraints, and therefore, this method does not necessarily yield a feasible point.
	\item CD
	\item ADMM
	\item ADMM/CD
	\item CD/ADMM
	\item CCP
\end{itemize}
For the two-phase ADMM, we chose the penalty parameter $\rho = \sqrt{m+l}$, as in~\cite{huang2016consensus}.
For the other heuristics, we used the default parameters.
For each choice of \emph{Suggest} and \emph{Improve} methods,
we sampled 10 candidate points and improved them, and took the best point.

In Table~\ref{t-result-mimo}, we show, for every combination of the \emph{Suggest} and \emph{Improve} methods,
the objective value of the best feasible point found and the total running time
(which includes the solve time of the relaxations).
The Scale \emph{Improve} method never produced a feasible point for all three \emph{Suggest} methods,
and therefore we left it out from the table.

Spectral and semidefinite relaxations yielded lower bounds of $1.11$ and $1.27$, respectively.
The best feasible point found had the objective value of $1.30$, obtained by performing the penalty CCP on SDR-based candidate points.

Unlike in~\S\ref{s-exp-bool-ls}, the two-phase coordinate descent did not produce a good feasible point.
On random candidate points, the best feasible point it found had the objective value of $9.62$.
Even with the SDR-based candidate points, the best objective value was $2.51$.
The two-phase ADMM, on the other hand, produced a point with objective value $1.86$ when combined with the SDR \emph{Suggest} method.
The penalty CCP method showed a more consistent performance for both random
and SDR \emph{Suggest} methods, and it found the best point with objective $1.30$.
However, it was unable to produce a feasible point from the solution of the spectral relaxation.

We note that running ADMM followed by coordinate descent performed better than applying only one of the heuristics.
However, running the two heuristics in the other order did not find a good feasible point.
This implies that depending on the problem,
the individual \emph{Improve} methods can be considered as building blocks
for more sophisticated \emph{Improve} sequences that can produce a better point.

The running time of the ADMM and penalty CCP \emph{Improve} methods was heavily affected when they were unable to find feasible points.
This issue can be addressed by specifying the maximum number of iterations performed by the methods,
or prematurely terminating the methods when they reach a preset time limit.

\begin{table}
\begin{center}
\begin{tabular}{|c||c|c|c|}
\hline
\bf{Best point} & Random & Spectral & SDR  \\ \hline\hline
CD      &  9.62 & 9.90 & 2.51 \\ \hline
ADMM    &  7.84 & 1.98 & 1.86 \\ \hline
ADMM/CD &  4.64 & 1.73 & 1.42 \\ \hline
CD/ADMM & 11.18 & 8.70 & 3.07 \\ \hline
CCP     &  1.31 & N/A  & 1.30 \\ \hline
\end{tabular}
\quad
\begin{tabular}{|c||c|c|c|}
\hline
\bf{Runtime} & Random & Spectral & SDR \\ \hline\hline
CD      & 195.3 & 572.1 & 790.7 \\ \hline
ADMM    &  1843 &  1237 &  8355 \\ \hline
ADMM/CD &  1636 &  1286 &  6241 \\ \hline
CD/ADMM &  1518 &  1317 &  2173 \\ \hline
CCP     &  5626 &  4770 &  2216 \\ \hline
\end{tabular}
\end{center}
\caption{Objective value of the best point found (left), and total running time (right) of the \emph{Suggest-and-Improve} heuristic on the secondary user multicast beamforming problem.}
\label{t-result-mimo}
\end{table}

\section*{Acknowledgement}
Parts of the paper are based on the class notes of EE392o, originally written by Alexandre d'Aspremont and Stephen Boyd~\cite{d2003relaxations}.

\clearpage
\appendix

\section{Solving QCQP with one variable}\label{s-onevar}
In this section, we consider the following one-variable QCQP:
\BEQ\label{eq-onevar}
\begin{array}{ll}
\mbox{minimize} & p_0 x^2 + q_0 x + r_0 \\
\mbox{subject to} & p_i x^2 + q_i x + r_i \le 0, \quad i=1, \ldots, m,
\end{array}
\EEQ
with variable $x \in \reals$.
We show an $O(m \log m)$ time solution method for solving~\eqref{eq-onevar} using only elementary algebra and data structures.
Consider the $i$th constraint
\[
p_i x^2 + q_i x + r_i \le 0,
\]
and the set $S_i$ of $x$ satisfying the constraint.
Assuming that $S_i$ is nontrivial, \ie, $S_i$ is not equal to $\reals$ or $\emptyset$,
it can take three different forms, depending on the sign of $p_i$:
\begin{itemize}
\item If $p_i = 0$, then the constraint is affine, and thus $S_i$ is a left- or right-unbounded interval. That is, $S_i$ has the form $(-\infty, a]$ or $[a, +\infty)$.
\item If $p_i > 0$, then $S_i$ is a closed interval $[a, b]$.
\item If $p_i < 0$, then $S_i$ is the union of a left-unbounded and a right-unbounded intervals, \ie, $S_i = (-\infty, a] \cup [b, +\infty)$, for some $a$ and $b$ (with $a < b$).
\end{itemize}
We inductively argue that the feasible set $S = S_1 \cap \cdots \cap S_m$ is a collection of
at most $m+1$ disjoint, potentially left- or right-unbounded intervals on $\reals$.
The base case where $m = 0$ gives $S = \reals$, which satisfies the claim.
Now, assume that $S$ is a collection of at most $m+1$ disjoint intervals on $\reals$.
We claim that $S \cap S_{m+1}$ is a collection of at most $m+2$ disjoint intervals on $\reals$.
This holds trivially if $S_{m+1}$ is a convex interval.
The only case where additional intervals can be introduced is when $p_{m+1} < 0$.
Let $S_{m+1} = (-\infty, a] \cup [b, +\infty)$, or equivalently, $S_{m+1} = \reals \setminus (a, b)$.
Taking the intersection of $S$ and $S_{m+1}$ is the same as ``subtracting'' an open interval $U = (a, b)$ from every interval $C$ in $S$.
Subtracting an open interval $U$ from a closed interval $C$ results in another closed interval $C'$,
unless $U \subset C$, in which case $C \setminus U$ is two disjoint closed intervals.
Since the intervals in $S$ are disjoint, there can be at most one $C$ such that $U \subset C$ holds.
This completes the inductive step, \ie, $S \cap S_{m+1}$ consists of at most $m+2$ disjoint intervals.


To compute these disjoint intervals, we use a balanced binary search tree, where each node corresponds to an interval, ordered by the left endpoint.
When taking the intersection of the current intervals with $S_i$, at most two intervals can change their endpoints, and some intervals can be removed from the data structure.
The case where $p_i < 0$ is the only case we potentially insert an additional interval.

Every insertion, deletion, or modification (which can be thought of as deletion followed by insertion) takes $O(\log N)$ time, where $N$ is the maximum number of nodes in the binary search tree at any given time.
To analyze the overall time complexity,
we can simply count the number of each operation types.
As argued above, the total number of insertions is bounded by $m$.
The number of deletions is naturally bounded by the number of insertions, which is also $m$.
The number of modifications is bounded by $2m$.
It follows that the time complexity of computing the feasible set is $O(m \log m)$.

Minimizing a quadratic function in this feasible set $\mathcal{S}$ can be done by evaluating
the objective at the endpoints of the intervals, as well as checking the
unconstrained minimizer (only when $p_0 > 0$).

\section{Solving QCQP with one constraint}\label{s-oneqcqp}

In this section, we describe the solution method for the following special case of~\eqref{eq-oneqcqp-general}:
\BEQ\label{eq-oneqcqp}
\begin{array}{ll}
\mbox{minimize} & \|x - z\|_2^2 \\
\mbox{subject to} & x^T P x + q^T x + r \le 0,
\end{array}
\EEQ
with variable $x \in \reals^n$.
For simplicity, we assume that the constraint is satisfiable for some $x \in \reals^n$.
Satisfiability is easily verified from the eigenvalue decomposition of $P$.

If $z$ satisfies the constraint, \ie, $z^T P z + q^T z + r \le 0$, then it is clear that $x^\star = z$ is the solution and there is nothing else to do.
Otherwise, due to complementary slackness~\cite[\S5.5.2]{boyd2004convex}, any optimal point $x^\star$ must satisfy the constraint with equality:
\[
x^{\star T} P x^\star + q^T x^\star + r = 0.
\]
Then, without loss of generality, we can work with the equality constrained version of~\eqref{eq-oneqcqp}:
\BEQ\label{eq-oneqcqp-eq}
\begin{array}{ll}
\mbox{minimize} & \|x - z\|_2^2 \\
\mbox{subject to} & x^T P x + q^T x + r = 0.
\end{array}
\EEQ
Let $P = Q\Lambda Q^T$ be the eigenvalue decomposition of $P$, with $\Lambda = \diag(\lambda_1, \ldots, \lambda_n)$. Consider the following problem, equivalent to~\eqref{eq-oneqcqp-eq}:
\BEQ\label{eq-oneqcqp-tr}
\begin{array}{ll}
\mbox{minimize} & \|\hat{x} - \hat{z}\|_2^2 \\
\mbox{subject to} & \hat{x}^T \Lambda \hat{x} + \hat{q}^T \hat{x} + r = 0,
\end{array}
\EEQ
where $\hat{z} = Q^T z$, $\hat{q} = Q^T q$, and $\hat{x} = Q^T x$.

The Lagrangian of~\eqref{eq-oneqcqp-tr} is given by:
\[
\mathcal{L}(\hat{x}, \nu) = \hat{x}^T (I+\nu\Lambda) \hat{x} + (\nu \hat{q} - 2\hat{z})^T \hat{x} + \nu c + \|\hat{z}\|_2^2.
\]
Since we assumed the feasibility of~\eqref{eq-oneqcqp-tr}, there must exist $\nu$ with $I+\nu \Lambda \succeq 0$,
such that the value of $\hat{x}$ minimizing the
Lagrangian also satisfies the equality
constraint of~\eqref{eq-oneqcqp-tr}.

There are two cases to consider:
\begin{itemize}
\item Case (i): $I+\nu\Lambda \succ 0$. \\
First, consider the range of $\nu$ so that $I + \nu \Lambda \succ 0$, or equivalently, $1 + \nu \lambda_i > 0$ for all $i$.
In this range, we can find the value of $\hat{x}$ minimizing the Lagrangian
by taking the derivative with respect to $\hat{x}$ and setting it to zero:
\BEQ\label{eq-xhat}
\hat{x} = -(1/2)(I + \nu\Lambda)^{-1}(\nu \hat{q} - 2\hat{z}).
\EEQ
By substituting $\hat{x}$ into the equality constraint and expanding out, we get a nonlinear equation in $\nu$:
\BEQ\label{eq-secular}
\sum_{i=1}^n \left( \frac{\lambda_i (\nu \hat{q}_i - 2\hat{z}_i)^2}{4(1+\nu\lambda_i)^2}  - \frac{\hat{q}_i (\nu \hat{q}_i - 2\hat{z}_i)}{2(1+\nu\lambda_i)} \right) + r = 0.
\EEQ
If this equation attains a solution in the range where $1+\nu\lambda_i > 0$
for all $i$, then the corresponding $\hat{x}$ given by~\eqref{eq-xhat} is the
solution to~\eqref{eq-oneqcqp-tr}. In order to solve~\eqref{eq-secular}
numerically, observe that the derivative of the lefthand side is
\[
-\sum_{i=1}^n \frac{(2 \lambda_i \hat{z}_i + \hat{q}_i)^2}{2 (1 + \lambda_i \nu)^3} < 0,
\]
and thus the lefthand side monotonically decreases in $\nu$.
Then, one can solve~\eqref{eq-secular} by checking where the sign of the lefthand side changes, using either bisection or Newton's method.

\item Case (ii): $I+\nu\Lambda \succeq 0$ and $I+\nu\Lambda$ is singular.\\
There are at most two values of $\nu$ where this is possible.
If $\lambda_{\min} = \min_i \lambda_i < 0$, then $\nu = -1/\lambda_{\min}$ is one such value.
If $\lambda_{\max} = \max_i \lambda_i > 0$, then $\nu = -1/\lambda_{\max}$ is another such value.
For these values of $\nu$, we can simply check if there exists $\hat{x}$ where the Karush-Kuhn-Tucker (KKT) conditions~\cite[\S5.5.3]{boyd2004convex} are held:
\[
2(I+\nu \Lambda)\hat{x} = 2\hat{z} - \nu \hat{q},
\qquad
\hat{x}^T \Lambda \hat{x} + \hat{q}^T \hat{x} + r = 0.
\]
\end{itemize}
Once $\hat{x}$ is found, then the solution $x^\star$ of~\eqref{eq-oneqcqp} is simply given by $x^\star = Q\hat{x}$.
\section{Splitting quadratic forms}\label{s-split}
In this appendix, we explore various ways of splitting an
indefinite matrix $P$ as a difference $P_+ - P_-$
of two positive semidefinite matrices, as in~\eqref{quadratic-split}.
The motivation for this discussion is that the performance
of convex-concave procedure can vary drastically depending
on how the quadratic form is split into convex and concave
parts~\cite{lipp2016variations,shen2016disciplined}.

\subsection{Desired properties}

In addition to the running time,
there are several other properties of the algorithm that are important in typical applications.

\paragraph{Time complexity.}
Clearly, we want the algorithm for computing the representation to be fast.
Asymptotic performance is important, but in practice,
even with the same asymptotic complexity,
one method may outperform another depending on how
sparsity is exploited.

\paragraph{Memory usage.}
In the worst case where both $P_+$ and $P_-$ are dense,
$2n^2$ floating point numbers must be stored.
If $P$ has a special sparsity pattern that can be exploited,
it is often desirable to keep the pattern as much as possible,
and represent $P_+$ and $P_-$ without introducing too many nonzeros.
Ideally, representing $P_+$ and $P_-$ should not cost much more than
representing $P$ itself.

\paragraph{Numerical stability.}
Due to the round-off errors of floating point numbers, the resulting
representation may be numerically inaccurate. This can be particularly
problematic when $P_+$ and $P_-$ are represented implicitly
(\eg, via Cholesky factorization),
rather than explicitly. Singular or badly conditioned matrices can often
introduce big round-off errors, and we want the algorithm to be robust in such
settings.

\paragraph{Additional curvature.}
We want $P_+$ and
$P_-$ to introduce as little ``distortion'' as possible,
\ie, we want the additional curvature measured by the nuclear norm
\[
\|P_+\|_* + \|P_-\|_* - \|P\|_*
\]
to be small. Since $P_+$ and $P_-$ are required to be positive semidefinite, this quantity is equal to
\[
\Tr P_+ + \Tr P_- - \|P\|_*.
\]

\subsection{Simple representations}

Any indefinite matrix can be made positive semidefinite by adding a large
enough multiple of the identity.
Let $\lambda_{\min} < 0$ be the smallest eigenvalue of $P$. Then, for any
$t \ge |\lambda_{\min}|$,
\[
P_+ = P + t I, \qquad P_- = t I,
\]
is a pair of positive semidefinite matrices whose difference is $P$.
If the magnitude of the maximum eigenvalue $\lambda_{\max} > 0$ is smaller than that of $\lambda_{\min}$, then an alternative representation is also possible:
\[
P_+ = tI, \qquad P_- = tI - P,
\]
where $t \ge \lambda_{\max}$. This representation is relatively easy to compute as it only requires a lower bound
on $|\lambda_{\min}|$ or $|\lambda_{\max}|$.
It also has a property that the sparsity of $P$ is
preserved as much as possible, in that no new off-diagonal nonzero entries are
introduced in $P_+$ or $P_-$. Its disadvantage is the additional curvature it introduces when $t$ is large:
\[
\|P_+\|_* + \|P_-\|_* - \|P\|_* = 2t.
\]

Another simple representation is based on the full eigenvalue decomposition of
$P$. This representation preserves the norm of $P$ and thus introduces no
additional curvature. Let $P = Q\Lambda Q^T$ be the eigenvalue decomposition
of $P$, where $\Lambda = \diag(\lambda_1, \ldots, \lambda_n)$ is the
eigenvalue matrix with
\[
\lambda_1 \ge \cdots \ge \lambda_k \ge 0 > \lambda_{k+1} \ge \cdots \ge \lambda_n.
\]
Then, $\Lambda$ can be
written as $\Lambda = \Lambda_+ - \Lambda_-$, where
\[
\Lambda_+ = \diag(\lambda_1, \ldots, \lambda_k, 0, \ldots, 0), \qquad
\Lambda_- = \diag(0, \ldots, 0, -\lambda_{k+1}, \ldots, -\lambda_n).
\]
Setting
\[
P_+ = Q\Lambda_+ Q^T, \qquad P_+ = Q\Lambda_- Q^T
\]
then gives a difference of positive semidefinite matrix representation of $P$.
This approach has a high computational cost from the full eigenvalue decomposition (which takes $O(n^3)$ time).
Also, in general, even when $P$ is sparse, the resulting matrices $P_+$ and $P_-$ are dense.

\subsection{Cholesky-like representations}\label{s-chol-rep}

When $P$ is positive definite, there exists a unique
lower triangular matrix $L \in \reals^{n \times n}$, called the \emph{Cholesky factor} of $P$, that satisfies
$P = LL^T$. This representation is known as
the \emph{Cholesky factorization} of $P$
(see, \eg,~\cite[\S C.3]{boyd2004convex}).
In this subsection, we explore a representation based on the
Cholesky factorization.
We start by describing the Cholesky algorithm,
which, in the simplest form, is a recursive algorithm.
If $P$ is $1$-by-$1$, then $L$ is
simply given by $\sqrt{P_{11}}$. If $P$ has two or more rows, let
\[
P = \left[ \begin{array}{cc} a & v^T \\ v & M \end{array} \right],
\]
with $a > 0$,
$v \in \reals^{n-1}$, $M \in \symm^{n-1}$.
Then, the Cholesky factor $L$ of $P$ is given by:
\[
L = \left[ \begin{array}{cc} \sqrt{a} & \\ v/\sqrt{a} & L^{\prime} \end{array} \right],
\]
where $L^{\prime}$ is the Cholesky factor of $M-(1/a)vv^T$.
The cost of computing a dense Cholesky factorization is $(1/3)n^3$ flops. In
case $P$ is sparse, various pivoting heuristics can be used to exploit the
sparsity structure and speed up the computation~\cite[\S6.3]{nocedal1999numerical}.

Cholesky factorization does not exist when $P$ is indefinite.
However, the LDL decomposition, which is a close variant of the
Cholesky factorization, exists for all symmetric matrices~\cite{bunch1971direct}.
It also has an
additional computational advantage since there is no need to take square
roots.
The idea of the LDL decomposition is to write $P$ as $LDL^T$, where $L \in \reals^{n \times n}$
is lower triangular with ones on the main diagonal, and
$D \in \reals^{n \times n}$ is block diagonal consisting of $1$-by-$1$ or
$2$-by-$2$ blocks.
When $D$
is diagonal (\ie, no $2$-by-$2$ blocks), then one can easily separate out the
negative entries of $D$ and the corresponding columns in $L$ to write $P$ as
\[
P = L_1 D_1 L_1^T - L_2 D_2 L_2^T,
\]
where $D_1$ and $D_2$ are nonnegative diagonal matrices.
When $D$ has $2$-by-$2$ blocks, however, it is not possible
to directly transform the LDL decomposition into the desired form.

\subsection{Difference-of-Cholesky representation}

In this subsection, we develop an adaptation
of the Cholesky decomposition that returns
a ``difference-of-Cholesky'' representation.
There are several advantages of this approach
over the method discussed in~\S\ref{s-chol-rep}.
First, $P$ can be any indefinite matrix, \ie, its LDL
decomposition can contain $2$-by-$2$ blocks.
Additionally, the method has a parameter $\delta > 0$
that controls the numerical robustness against division by small numbers;
the algorithm will never perform division by numbers
whose magnitude is smaller than $\sqrt{\delta}$,
at the expense of additional curvature.
Finally, various heuristics for Cholesky decomposition that
promote sparsity can be applied directly to this modification.

Let $P \in \symm^n$ be a symmetric matrix of two or more rows:
\[
P = \left[ \begin{array}{cc} a & v^T \\ v & M \end{array} \right],
\]
where $a \in \reals$, $v \in \reals^{n-1}$, $M \in \symm^{n-1}$.
Our goal is to find a pair of
lower triangular matrices $L_1$ and $L_2$ such that
\[
P = L_1 L_1^T - L_2 L_2^T.
\]
As in~\S\ref{s-chol-rep}, we show a recursion for $L_1$ and $L_2$.
Without loss of generality, we assume that $a \ge 0$; if $a < 0$, we can simply find the difference-of-Cholesky representation for $-P$ and swap $L_1$ and $L_2$.
Then, there are only two cases to consider, depending on the magnitude of $a$.
\begin{itemize}
\item Case (i): $a > \delta$.\\
In this case, the recursion is given by:
\[
L_1 = \left[ \begin{array}{cc} \sqrt{a} & \\ v/\sqrt{a} & L'_1 \end{array} \right], \quad
L_2 = \left[ \begin{array}{cc} 0 & \\ 0 & L'_2 \end{array} \right].
\]
Here, $L'_1$ and $L'_2$ are lower triangular matrices satisfying
\[
L'_1 L^{\prime T}_1 - L'_2 L^{\prime T}_2 = M-(1/a) vv^T,
\]
which can be obtained by recursively applying the algorithm.
\item Case (ii): $0 \le a \le \delta$.\\
The recursion in this case has the following form:
\[
L_1 = \left[ \begin{array}{cc} \sqrt{\delta+a} & \\ v_1 & L'_1 \end{array} \right], \quad
L_2 = \left[ \begin{array}{cc} \sqrt{\delta} & \\ v_2 & L'_2 \end{array} \right].
\]
Here, $v_1$ and $v_2$ are arbitrary vectors satisfying
\[
\sqrt{\delta + a} \, v_1 - \sqrt{\delta} \, v_2 = v,
\]
and $L'_1$ and $L'_2$ are lower triangular matrices with
\[
L'_1 L^{\prime T}_1 - L'_2 L^{\prime T}_2 = M-v_1 v_1^T + v_2 v_2^T.
\]
Note that we have an additional degree of freedom,
as we can freely choose $v_1$ or $v_2$.
For example, by letting $v_1 = 0$ or $v_2 = 0$,
it is possible to trade off the number
of nonzero elements between $L_1$ and $L_2$, which is a property that was not
readily available in the other representations we discussed.
When we choose $v_1 = 0$, the recursion becomes
\[
L_1 = \left[ \begin{array}{cc} \sqrt{\delta+a} & \\ 0 & L'_1 \end{array} \right], \quad
L_2 = \left[ \begin{array}{cc} \sqrt{\delta} & \\ -v/\sqrt{\delta} & L'_2 \end{array} \right],
\]
with
\[
L'_1 L^{\prime T}_1 - L'_2 L^{\prime T}_2 = M+(1/\delta) vv^T.
\]
Similarly, if we choose $v_2 = 0$, we get
\[
L_1 = \left[ \begin{array}{cc} \sqrt{\delta+a} & \\ v/\sqrt{\delta+a} & L'_1 \end{array} \right], \quad
L_2 = \left[ \begin{array}{cc} \sqrt{\delta} & \\ 0 & L'_2 \end{array} \right],
\]
with
\[
L'_1 L^{\prime T}_1 - L'_2 L^{\prime T}_2 = M-\frac{1}{\delta+a} \, vv^T.
\]
\end{itemize}

This method is very close to the original Cholesky algorithm, and
most extensions and techniques applicable to Cholesky factorization
naturally apply to the difference-of-Cholesky method.
For example, it is simple to modify this algorithm to return a pair of LDL factorizations
to avoid computing square roots.
Other examples include pivoting heuristics
for avoiding round-off
errors or ill-conditioned matrices.

\section{Source examples}\label{s-source-codes}

In this section, we give the full version of the script used in~\S\ref{s-usage},
as well as sample implementations of the \emph{Suggest-and-Improve} framework
for the numerical examples considered in~\S\ref{s-experiment}.
The codes are written using our open source package \texttt{QCQP}.
We put special emphasis on the similarities between the description of Algorithm~\ref{master-algo} and the actual codes,
as well as the short length of the codes.

\subsection{Sample usage}
\verbatiminput{codes/walkthrough.py}

\subsection{Boolean least squares}
\verbatiminput{codes/sample_boolean_least_squares.py}

\subsection{Secondary user multicast beamforming}
\verbatiminput{codes/sample_secondary_user_beamforming.py}

\clearpage
\nocite{*}\bibliography{refs}
\end{document}